\newcommand{\CC}{\mathbb{C}}
\newcommand{\FF}{\mathbb{F}}
\newcommand{\ZZ}{\mathbb{Z}}
\newcommand{\RR}{\mathbb{R}}
\newcommand{\CP}{\mathbb{CP}}
\newcommand{\RP}{\mathbb{RP}}
\newcommand{\PP}{\mathbb{P}}
\newcommand{\<}{\langle}
\renewcommand{\>}{\rangle}
\newcommand{\SYT}{\mathsf{SYT}}
\newcommand{\Shift}{\mathsf{SST}}
\newcommand{\bolda}{{\bf a}}
\newcommand{\boldb}{{\bf b}}
\newcommand{\boldc}{{\bf c}}
\newcommand{\nth}{\ensuremath{^\text{th}}\xspace}
\newcommand{\polm}{\CC_{m-1}[z]}
\newcommand{\pol}[1]{\CC_{#1}[z]}
\newcommand{\Rect}{{\mathchoice%
{\raisebox{.08ex}{\large $\sqsubset\!\!\!\!\sqsupset$}}
{\raisebox{.08ex}{\large $\sqsubset\!\!\!\!\sqsupset$}}
{\sqsubset\!\!\!\!\sqsupset}
{\sqsubset\!\!\!\!\sqsupset}
}}
\newcommand{\Staircase}
{\,{\setlength\unitlength{1.75ex}%
\begin{picture}(1,1)(0,0)
\put(0,1){\line(1,0){1}}
\put(1,0){\line(0,1){1}}
\put(0,1){\line(1,-1){1}}
\end{picture}%
}\,}
\newcommand{\Wr}{\mathrm{Wr}}
\newcommand{\Gr}{\mathrm{Gr}}
\newcommand{\OGpol}{\mathrm{OG}(n,\pol{2n})}
\newcommand{\OGn}{\mathrm{OG}(n,2n{+}1)}
\newcommand{\sssbar}{{\scriptscriptstyle|}}
\newcommand{\tkinterval}{{(\sssbar\tau\sssbar, \sssbar\kappa\sssbar]}}
\newcommand{\defn}[1]{\textbf{\emph{#1}}}
\newtheorem{lemma}{Lemma}
\newtheorem{theorem}[lemma]{Theorem}
\newtheorem{proposition}[lemma]{Proposition}
\newtheorem{remark}[lemma]{Remark}
\newenvironment{restatetheorem}[1]
   {\begingroup \newtheorem*{theoremx}{#1}\begin{theoremx}}
   {\end{theoremx}\endgroup}
\title{The Wronski map and shifted tableau theory}
\author{Kevin Purbhoo}
\address{Department of Combinatorics \& Optimization \\
         University of Waterloo \\
         Waterloo, ON, N2L 3G1\\
         CANADA}
\email{kpurbhoo@math.uwaterloo.ca}
\urladdr{http://www.math.uwaterloo.ca/\~{}kpurbhoo}
\thanks{Research partially supported by an NSERC discovery grant.}
\begin{document}

\begin{abstract}
The Mukhin-Tarasov-Varchenko Theorem, conjectured by B. and M. Shapiro,
has a number of interesting consequences.  Among them
is a well-behaved correspondence between certain points on a 
Grassmannian --- those sent by the Wronski map to polynomials with only 
real roots --- and (dual equivalence classes of) Young tableaux.  

In this paper, we restrict this correspondence to the orthogonal 
Grassmannian $\OGn \subset \Gr(n,2n{+}1)$.  We prove that a point lies 
on $\OGn$ if and only if the corresponding tableau has a certain
type of symmetry.  From this we recover much of the theory of shifted 
tableaux for 
Schubert calculus on $\OGn$, including a new, geometric proof of the
Littlewood-Richardson rule for $\OGn$.
\end{abstract}

\maketitle 

\thispagestyle{plain}

\medskip

\section{Introduction}

\medskip

For any non-negative integer $k$,
let $\pol{k}$ denote the $(k{+}1)$-dimensional
complex vector space of polynomials of degree at most $k$:
$$\pol{k} := \{f(z) \in \FF[z] \mid \deg f(z) \leq k\}\,.$$

Let $X = \Gr(n, \pol{2n})$, the Grassmannian variety 
of all $n$-dimensional subspaces
of the $(2n{+}1)$-dimensional vector space $\pol{2n}$. 
If $x \in X$ is the span of polynomials $f_1(z), \dots, f_n(z)$, 
the Wronskian
\[
   \Wr(x;z) :=
   \begin{vmatrix}
   f_1(z) & \cdots & f_n(z)\\
   f_1'(z) & \cdots  & f_n'(z) \\
   \vdots &  \vdots & \vdots \\
   f_1^{(n-1)}(z) & \cdots & f_n^{(n-1)}(z)
   \end{vmatrix}\,.
\]
is a non-zero polynomial of degree at most $n(n{+}1)$, and up
to a scalar multiple, it depends only on $x$. 
Hence, $x \mapsto \Wr(x;z)$ determines a well-defined, 
morphism schemes $\Wr : X \to \PP\big(\pol{n(n+1)}\big)$ 
called the \defn{Wronski map}.  This morphism is flat and 
finite~\cite{EH}.

Let $\SYT(\Rect)$ denote the set of standard Young tableaux whose
shape is an $n \times (n{+}1)$ rectangle.
The degree of Wronski map is equal to $|\SYT(\Rect)|$; hence one
might hope to find a surjective correspondence between 
$\SYT(\Rect)$
and the points of a fibre $\Wr^{-1}(h(z))$ of the Wronski map.
It turns out that this is possible to do when the roots of $h(z)$
are all real; in this case, we write 
\[
  h(z) = \prod_{a_i \neq \infty}(z+a_i)\,,
\]
$a_1, \dots, a_{n(n+1)} \in \RP^1$ --- a polynomial of degree $n(n{+}1)-k$,
is considered to have a root of multiplicity $k$ at $\infty$.  
Eremenko and Gabrielov
first established such a correspondence in an asymptotic setting
\cite{EG}, and
the remarkable theorem of Mukhin, Tarasov and Varchenko 
(see Theorem~\ref{thm:MTV} in Section~\ref{sec:correspondence})
ensures that it can extended unambiguously to polynomials with only
real roots.  We refer the reader to the survey article~\cite{Sot-F} 
for a discussion of the history, context and other applications of 
this result.

In this paper, we will use the notation
$X(\bolda) := \Wr^{-1}\big(\prod_{a_i \neq \infty}(z+a_i)\big)$,
to denote the fibre of the Wronski map associated to the
multiset $\bolda = \{a_1, \dots, a_{n(n+1)}\}$, and $x_T(\bolda)$
to denote the specific point in $X(\bolda)$ that corresponds to
the tableau $T\in \SYT(\Rect)$.
We will review a characterization and other key properties of the 
correspondence in Section~\ref{sec:correspondence}.
For now it is enough to remark that if $n > 1$, 
$\bolda \mapsto x_T(\bolda)$ is \emph{not} a continuous function.
As strange as it may seem, this is a feature, not a bug: 
in~\cite{Pur-Gr}, we showed
that the discontinuities 
essentially encode Sch\"utzenberger's jeu de taquin, and this 
fact provides a tight connection between the geometry of $X$ and the 
combinatorics of Young tableaux.  

Our goal in this paper is to establish similar results for the orthogonal 
Grassmannian.
Let $\< \cdot, \cdot \>$ be the non-degenerate 
symmetric bilinear form on $\pol{2n}$ given by
$$\Big\< \sum_{k=0}^{2n} a_k \tfrac{z^k}{k!}\,,\,
\sum_{\ell=0}^{2n} b_\ell \tfrac{z^\ell}{\ell!} \Big\>
 = \sum_{m =0}^{2n} (-1)^m a_m b_{2n-m}\,.$$
The \defn{orthogonal Grassmannian} $Y = \OGpol \subset X$ is 
the variety of all $n$-dimensional isotropic subspaces of $\pol{2n}$.

The restriction of the Wronski map to $Y$ has the interesting
property that $\Wr(y;z)$ is a perfect square for all $y \in Y$
\cite{Pur-OG}.
This raises the following question.  Suppose that $x \in X$ has 
the property that $\Wr(x;z)$ is a perfect square. Under what
conditions can we conclude that $x \in Y$?

To give a concrete answer, we will need to assume, moreover, that 
$\Wr(x;z) = \prod_{a_i \neq \infty}(z+a_i)$ has only real roots.
This allows us to write $x=x_T(\bolda)$ for some 
$T \in \SYT(\Rect)$.  Suppose that the tableau $T$ has entry $k$ 
in row $i_k$ and column $j_k$.  
We'll say that $T$ is \defn{symmetrical} if 
$i_{2k}= j_{2k-1}$ and $j_{2k} = i_{2k-1}+1$, for all
$k=1, \dots, \frac{n(n+1)}{2}$.  See Figure~\ref{fig:symmtableau}
for an example.

\begin{figure}[tb]
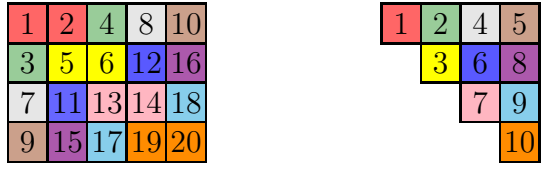

\begin{center}
  \begin{young}
  ![Red!60]1 & ![Red!60]2  & ![Green!40]4 & ![Grey!20]8 & ![Sienna!55]10\\
  ![Green!40]3 & ![Yellow]5 & ![Yellow]6 & ![Blue!65]12 & ![Purple!65]16 \\
  ![Grey!20]7 & ![Blue!60]11 & ![LightPink]13 & ![LightPink]14 & ![SkyBlue]18 \\
  ![Sienna!55]9 & ![Purple!65]15 & ![SkyBlue]17 & ![DarkOrange]19 & ![DarkOrange]20
  \end{young}
\qquad\qquad
  \begin{young}
  ,& ![Red!60]1 & ![Green!40]2 & ![Grey!20]4 & ![Sienna!55]5 \\
  ,& ,& ![Yellow]3 & ![Blue!65]6 & ![Purple!65]8 \\
  ,& ,& ,& ![LightPink]7 & ![SkyBlue]9 \\
  ,& ,& ,& ,& ![DarkOrange]10
  \end{young}
\end{center}
\caption{A symmetrical tableau (left), and the equivalent shifted 
tableau (right).}
\label{fig:symmtableau}
\end{figure}

We are now ready to state our main result, whose proof will be given
in Section~\ref{sec:proofmain}.
\begin{theorem}
\label{thm:Ysymm}
Let $x \in X$ be such that $\Wr(x;z) = \prod_{a_i \neq \infty}(z+a_i)$ 
is a perfect square with only real roots.  Then $x \in Y$ if and only 
if there exists a symmetrical tableau $T \in \SYT(\Rect)$ such that 
$x = x_T(\bolda)$.
\end{theorem}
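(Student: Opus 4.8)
The plan is to fix the multiset $\bolda$, so that $h(z):=\prod_{a_i\neq\infty}(z+a_i)$ is a perfect square with only real roots, and to prove the sharper statement that $Y\cap X(\bolda)$ equals $\{x_T(\bolda):T\in\SYT(\Rect)\text{ symmetrical}\}$ exactly; since $\Wr|_Y$ takes values among perfect squares \cite{Pur-OG}, both sides of the asserted equivalence depend only on this finite set. The engine is that $\langle\,\cdot\,,\,\cdot\,\rangle$ is, up to a scalar, the \emph{unique} $\mathrm{SL}_2$-invariant bilinear form on $\pol{2n}\cong\mathrm{Sym}^{2n}(\CC^2)$. Hence the $\mathrm{SL}_2$-action permuting the osculating flags preserves $\langle\,\cdot\,,\,\cdot\,\rangle$; and since the osculating flag at $\infty$ is $\pol0\subset\pol1\subset\cdots\subset\pol{2n}$, for which one checks directly that $\pol{j-1}^{\perp}=\pol{2n-j}$, every osculating flag is \emph{self-dual}: the $\langle\,\cdot\,,\,\cdot\,\rangle$-orthogonal complement $F_j(a)^{\perp}$ of the $j$-dimensional osculating subspace at $a\in\RP^1$ is the $(2n{+}1{-}j)$-dimensional osculating subspace $F_{2n+1-j}(a)$.

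First I would transport the entire correspondence through the linear-algebra duality $\iota\colon x\mapsto x^{\perp}$ (orthogonal complement with respect to $\langle\,\cdot\,,\,\cdot\,\rangle$), an isomorphism $X=\Gr(n,\pol{2n})\to\Gr(n{+}1,\pol{2n})$. Self-duality of the osculating flags means $\iota$ carries the osculating Schubert stratification to its conjugate; in particular it intertwines the Wronski map on $X$ with the Wronski map $\Wr'$ on $\Gr(n{+}1,\pol{2n})$ (both land in $\PP(\pol{n(n+1)})$) up to a fixed \emph{real} automorphism of $\PP^1$, which I suppress. Since Grassmann duality conjugates ramification partitions, and $\iota$ is visibly independent of the ordering of $\bolda$, hence commutes with the jeu-de-taquin degenerations that pin down the MTV correspondence, the characterization recalled in Section~\ref{sec:correspondence} upgrades this to the identity
\[\iota\big(x_T(\bolda)\big)=x'_{T^{\mathsf t}}(\bolda),\]
where $x'$ is the analogous correspondence on $\Gr(n{+}1,\pol{2n})$ and $T^{\mathsf t}$, a standard Young tableau of the $(n{+}1)\times n$ rectangle, is the transpose of $T$ (entry $k$ at $(i,j)$ in $T$ becomes entry $k$ at $(j,i)$ in $T^{\mathsf t}$).

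With this in hand, the theorem becomes purely combinatorial. As $\dim x=n=(2n{+}1)-(n{+}1)$, the subspace $x$ is isotropic exactly when $x\subseteq x^{\perp}=\iota(x)$; so $x_T(\bolda)\in Y$ if and only if $x_T(\bolda)\subseteq x'_{T^{\mathsf t}}(\bolda)$, the inclusion of an $n$-plane in an $(n{+}1)$-plane lying over the same Wronskian. Using the elementary identity $\dim\!\big(x^{\perp}\cap F_j(a)\big)=j-n+\dim\!\big(x\cap F_{2n{+}1{-}j}(a)\big)$, this inclusion is equivalent to the family of inequalities $\dim\!\big(x_T(\bolda)\cap F_j(a)\big)\le j-n+\dim\!\big(x_T(\bolda)\cap F_{2n{+}1{-}j}(a)\big)$ over all $a\in\RP^1$ and all $j$ --- that is, at each point the ramification sequence of $x_T(\bolda)$ must interleave with its own conjugate. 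Feeding this through the description of the ramification sequences of $x_T(\bolda)$ in terms of the successive entries of $T$ (Section~\ref{sec:correspondence}), one checks that it holds precisely when $i_{2k}=j_{2k-1}$ and $j_{2k}=i_{2k-1}+1$ for every $k$, i.e.\ when $T$ is symmetrical. As a consistency check, the number of symmetrical $T$ equals the number of standard shifted tableaux of the staircase $\delta_n$ (Figure~\ref{fig:symmtableau}), which, by Schubert calculus, is the degree of the Wronski map on $\OGn$.

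The step I expect to be the main obstacle is the identity $\iota(x_T(\bolda))=x'_{T^{\mathsf t}}(\bolda)$. That $\iota$ restricts to a bijection between the fibre $X(\bolda)$ and the corresponding fibre of $\Wr'$ is immediate; what takes work is matching the tableau \emph{labels}, since these are defined by a limiting procedure governed by jeu de taquin. I would pin the identity down on a single configuration with all $a_i$ distinct and in general real position --- where the Eremenko--Gabrielov description \cite{EG} makes both sides explicit --- and then propagate it to arbitrary $\bolda$ along real paths, using the compatibility of $\iota$ with the monodromy worked out in \cite{Pur-Gr}. The subsequent bookkeeping in paragraph three, translating the ramification inequalities into the symmetry condition on $T$, is routine given the results of Section~\ref{sec:correspondence} but still needs to be carried out with care.
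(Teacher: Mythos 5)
There is a genuine gap, and it sits in your third paragraph. First, the asserted equivalence between the inclusion $x\subseteq x^{\perp}$ and the family of dimension inequalities $\dim(x\cap F_j(a))\le \dim(x^{\perp}\cap F_j(a))$ is false as a statement of linear algebra: those inequalities are necessary for containment but nowhere near sufficient (two transverse lines in a plane satisfy all such inequalities relative to a generic flag without either containing the other). Second, even granting the equivalence, the inequalities carry far less information about $T$ than you need. For $\bolda=\boldb^\star$ with $\boldb$ a set, the ramification of $x_T(\bolda)$ at $b_k$ is exactly the rectification shape of $T_{[2k-1,2k]}$ --- a single domino --- and your interleaving condition at $b_k$ says only that this domino is horizontal, i.e.\ that entry $2k-1$ sits strictly to the left of entry $2k$. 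That condition is strictly weaker than symmetry once $n\ge 3$: the tableau of shape $\Rect$ whose first row begins $1,2,3,4$ has every pair $\{2k-1,2k\}$ rectifying to a horizontal domino, yet it is not symmetrical (entry $4$ would have to occupy position $(3,2)$, not $(1,4)$). The ramification data at the given points of $\bolda$ simply does not determine ``the successive entries of $T$''; the tableau is recovered only through the limiting procedure in which $b_1,\dots,b_k$ are degenerated to a common point. This is precisely why the paper's proof has a second half: it uses Lemma~\ref{lem:Ycomponents}(i) to slide $b_1,\dots,b_k$ to $0$ and then reads off, via Theorem~\ref{thm:correspondence}(iv) and Proposition~\ref{prop:Yschubertwronski}, that the cumulative shape of $T_{\le 2k}$ must be $\widetilde\sigma$ for a strict partition $\sigma$; the horizontal-domino condition plus these cumulative-shape conditions together force symmetry. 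The paper then closes with a counting step ($|Y(\boldb)|=|\Shift(\Staircase)|$ equals the number of symmetrical tableaux) to get surjectivity --- the step you relegate to a ``consistency check'' is actually doing real work, since the degeneration argument only yields injectivity of $Y(\boldb)\to\{\text{symmetrical }T\}$.

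Your opening move --- the self-duality $F_j(a)^{\perp}=F_{2n+1-j}(a)$ of the osculating flags and the proposed identity $\iota(x_T(\bolda))=x'_{T^{\mathsf t}}(\bolda)$ --- is a genuinely different and attractive starting point, and the transposition identity is plausibly true (it is the kind of statement one could prove by the degeneration/uniqueness argument of Theorem~\ref{thm:correspondence}). But even with that identity in hand you would still need a mechanism for converting the geometric condition $x_T(\bolda)\subseteq x'_{T^{\mathsf t}}(\bolda)$ into a condition on $T$, and pointwise ramification at the points of $\bolda$ cannot do it. Any repair I can see ends up reintroducing the paper's two ingredients: degeneration of several $b_i$ to a common point to expose the cumulative shapes, and the enumerative identity from Theorem~\ref{thm:YMTV} to upgrade injectivity to a bijection.
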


The combinatorics of symmetrical tableaux are essentially the same
as the combinatorics of shifted tableaux; indeed if one deletes the
odd entries from a symmetrical tableau, the result is a 
standard shifted tableau (with entries multiplied by 2).  
In Section~\ref{sec:shifted} we will use
Theorem~\ref{thm:Ysymm} to show 
that the results of~\cite[Section 6]{Pur-Gr} have
analogues for $Y$, where tableaux are replaced by shifted tableaux.
This includes a geometric proof of the Littlewood-Richardson 
rule for $\OGn$.

We had already noted in~\cite{Pur-OG} that it should be possible to
prove these analogues by adapting the proofs in \cite{Pur-Gr}.  This, 
however, would be a long and tedious exercise.  The approach
we take in this paper is considerably more efficient.  Rather than
reprove everything, we will use Theorem~\ref{thm:Ysymm},
in combination with results from \cite{Pur-OG}, to deduce facts 
about $Y$ easily and directly from known facts about $X$.

\medskip

\section{Tableaux and points of $X$}
\label{sec:correspondence}

\medskip

Rather than recall exactly how the correspondence 
$(T, \bolda) \mapsto x_T(\bolda)$ was originally defined, we will state 
a theorem (Theorem~\ref{thm:correspondence}) that 
describes some of its important properties, and prove that these 
properties characterize the map.  Before we do this, we need some 
additional notation and background.

For each $a \in \CP^1$, define a full flag in $\pol{2n}$
\[
  F_\bullet(a) \ :\ 
  \{0\} \subset F_1(a) \subset \dots \subset F_{2n}(a) \subset \pol{2n}\,.
\]
For $a \in \CC$, $F_i(a) := (z+a)^{2n+1-i}\CC[z] \cap \polm$
is the set of
polynomials in $\pol{2n}$ divisible by $(z+a)^{2n+1-i}$.
We also set 
$F_i(\infty) := \pol{i-1} = \lim_{a \to \infty} F_i(a)$.

Let $\Lambda$ denote the set of all partitions
$\lambda : (\lambda^1 \geq \dots\geq \lambda^n)$, where $\lambda^1 \leq n{+}1$,
$\lambda^n \geq 0$.  The largest partition in $\Lambda$ is denoted by
$\Rect$.  For each $\lambda \in \Lambda$ we have
a \defn{Schubert variety} in $X$ relative to the flag $F_\bullet(a)$:
\[
  X_\lambda(a) 
  := \{x \in X \mid \dim \big(x \cap F_{n+1-\lambda^i+i}(a) \big) \geq i\,,
  \text{ for $i=1, \dots, n$}\}\,,
\]
which has codimension $|\lambda|$ in $X$.  We denote its cohomology
class by $[X_\lambda] \in H^{2|\lambda|}(X)$.

The relationship between Schubert varieties and the Wronski map
is given by the following classical fact (see e.g. \cite{EH,Pur-Gr,Sot-F}).

\begin{proposition}
\label{prop:Xschubertwronski}
$\Wr(x;z)$ is divisible by $(z+a)^k$ if and only
if $x \in X_\lambda(a)$ for some partition $\lambda \vdash k$.
\end{proposition}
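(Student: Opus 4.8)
The plan is to reduce the statement to the classical formula for the order of vanishing of a Wronskian in terms of the ramification data of $x$ relative to the osculating flag $F_\bullet(a)$, and then to translate that data into the partition indexing the Schubert cell containing $x$. I treat $a \in \CC$; the case $a=\infty$ is handled identically, using $F_i(\infty) = \pol{i-1} = \{g : \deg g \le i{-}1\}$ in place of $F_i(a)$, ``$2n - \deg g$'' in place of the order of vanishing at $z=-a$, and the leading-term version of the computation below. Since the orders of vanishing at $z=-a$ of the nonzero elements of $x$ take exactly $n$ distinct values, choose a basis $g_1, \dots, g_n$ of $x$ in echelon form with respect to the filtration by order of vanishing at $z=-a$, and write $d_i := \mathrm{ord}_{z=-a}\, g_i$, so that $0 \le d_1 < d_2 < \cdots < d_n \le 2n$. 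The leading term of a nonzero combination $\sum c_i g_i$ is that of the $g_i$ of smallest index with $c_i \neq 0$, so from the definition $F_j(a) = (z+a)^{2n+1-j}\CC[z] \cap \pol{2n}$ we get $x \cap F_j(a) = \mathrm{span}\{g_i : d_i \ge 2n{+}1{-}j\}$, hence
\[
  \dim\bigl(x \cap F_j(a)\bigr) \;=\; \#\{\, i : d_i \ge 2n{+}1{-}j \,\}\,.
\]

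The technical core is the identity
\[
  \mathrm{ord}_{z=-a}\Wr(x;z) \;=\; \sum_{i=1}^n \bigl(d_i - (i{-}1)\bigr) \;=\; \sum_{i=1}^n d_i \,-\, \binom{n}{2}\,,
\]
which is independent of the choice of echelon basis, since $\Wr(g_1, \dots, g_n)$ then changes only by a nonzero scalar. To prove it I would substitute $w = z+a$, expand $g_i = \sum_{m \ge d_i} c_{i,m} w^m$ with $c_{i,d_i}\neq 0$, and use multilinearity of the Wronskian in its arguments together with $\Wr(w^{m_1}, \dots, w^{m_n}) = \pm V(m_1, \dots, m_n)\, w^{\,m_1 + \cdots + m_n - \binom{n}{2}}$, where $V$ is the Vandermonde determinant (obtained by factoring $w^{m_c}$ out of each column and $w^{-r}$ out of the $r$\nth derivative row, leaving a falling-factorial matrix whose determinant is the Vandermonde), which is nonzero precisely when the $m_i$ are pairwise distinct. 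Among the tuples $(m_1, \dots, m_n)$ with $m_i \ge d_i$ that contribute a nonzero term, the degree $m_1 + \cdots + m_n - \binom{n}{2}$ is minimized exactly at $m_i = d_i$ for all $i$ --- since $m_i \ge d_i$ together with $\sum m_i \le \sum d_i$ forces $m_i = d_i$ --- and there the coefficient $\bigl(\prod_i c_{i,d_i}\bigr)\,V(d_1, \dots, d_n)$ is nonzero because the $d_i$ are distinct. I expect this to be the one step requiring genuine care: verifying that the putative lowest-order term survives and that no other monomial contributes to it.

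Finally, set $\mu^i := d_{n+1-i} + i - n$ for $i = 1, \dots, n$. The strict inequalities $d_1 < \cdots < d_n$ force $\mu^1 \ge \cdots \ge \mu^n \ge 0$, and $\mu^1 = d_n - n + 1 \le n+1$, so $\mu \in \Lambda$; moreover $|\mu| = \sum_i d_i - \binom{n}{2}$, which by the previous paragraph equals $\mathrm{ord}_{z=-a}\Wr(x;z)$. Unwinding the definition of $X_\lambda(a)$ with the dimension formula above, and using that $\#\{j : d_j \ge t\} \ge i$ iff $d_{n+1-i} \ge t$, we find that $x \in X_\lambda(a)$ if and only if $d_{n+1-i} \ge n + \lambda^i - i$ for every $i$, i.e.\ if and only if $\lambda^i \le \mu^i$ for every $i$, i.e.\ if and only if $\lambda \subseteq \mu$ as Young diagrams. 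Now $(z+a)^k \mid \Wr(x;z)$ iff $k \le \mathrm{ord}_{z=-a}\Wr(x;z) = |\mu|$; and $k \le |\mu|$ holds iff some partition $\lambda \vdash k$ satisfies $\lambda \subseteq \mu$ (one direction is trivial, and for the other one deletes $|\mu|-k$ boxes from outer corners of $\mu$ one at a time, any sub-diagram of $\mu \subseteq \Rect$ automatically lying in $\Lambda$). Combining the last two sentences yields exactly the claimed equivalence: $(z+a)^k$ divides $\Wr(x;z)$ if and only if $x \in X_\lambda(a)$ for some $\lambda \vdash k$.
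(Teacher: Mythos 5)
Your argument is correct, but note that the paper does not actually prove this proposition: it is stated as a classical fact with citations to \cite{EH,Pur-Gr,Sot-F}, so there is no internal proof to compare against. What you have written is a complete, self-contained version of the standard argument from that literature: the identity $\mathrm{ord}_{z=-a}\Wr(x;z)=\sum_i\bigl(d_i-(i{-}1)\bigr)$ for an echelon basis, established via multilinearity and the Vandermonde evaluation of $\Wr(w^{m_1},\dots,w^{m_n})$, followed by the translation of the dimension conditions defining $X_\lambda(a)$ into the inequalities $\lambda^i\le\mu^i$ for the partition $\mu$ with $|\mu|=\mathrm{ord}_{z=-a}\Wr(x;z)$. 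The individual steps all check out: the minimizing tuple $(m_1,\dots,m_n)=(d_1,\dots,d_n)$ is indeed the unique contributor to the lowest-order term and its coefficient is nonzero because the $d_i$ are distinct; the bound $d_n\le 2n$ gives $\mu^1\le n{+}1$ so $\mu\in\Lambda$; and the final reduction from ``$k\le|\mu|$'' to ``some $\lambda\vdash k$ with $\lambda\subseteq\mu$'' is handled correctly, including the point that any subdiagram of $\mu\subseteq\Rect$ lies in $\Lambda$. The treatment of $a=\infty$ by replacing order of vanishing with $2n-\deg g$ is consistent with the paper's convention that a Wronskian of degree $n(n{+}1)-k$ has a root of multiplicity $k$ at $\infty$.
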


The Mukhin-Tarasov-Varchenko Theorem asserts, moreover, that 
intersections of Schubert varieties relative to the flags $F_\bullet(a)$
are as well behaved as one might possibly hope.  

\begin{theorem}[Mukhin-Tarasov-Varchenko \cite{MTV1, MTV2}]
\label{thm:MTV}
If $a_1, \dots, a_s \in \RP^1$, and 
$\lambda_1, \dots \lambda_s \in \Lambda$ are partitions with
$|\lambda_1| + \dots + |\lambda_s| = \dim X$, then the
intersection
\[
   X_{\lambda_1}(a_1) \cap \dots \cap X_{\lambda_s}(a_s)
\]
is finite, transverse, every point in the intersection is real
(i.e. has a basis in $\RR[z]$).
\end{theorem}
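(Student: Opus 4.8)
The plan is to derive this from the Bethe ansatz for the Gaudin model, following Mukhin, Tarasov and Varchenko. The first step is to rephrase the intersection geometrically: by Proposition~\ref{prop:Xschubertwronski} together with standard Schubert calculus for the flags $F_\bullet(a)$, a point $x$ lies in $X_{\lambda_1}(a_1)\cap\dots\cap X_{\lambda_s}(a_s)$ exactly when $x$ is an $n$-dimensional space of polynomials of degree $\le 2n$ whose ramification at each $a_j$ is prescribed by $\lambda_j$ and which is unramified elsewhere. To such a configuration one attaches an evaluation tensor product $L_{\lambda_1}\otimes\dots\otimes L_{\lambda_s}$ of irreducible $\mathfrak{gl}_n$-modules at the points $a_1,\dots,a_s$, and a distinguished weight subspace $W$ inside it; a Schubert-calculus count shows $\dim W$ equals the expected number of intersection points.

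The second step is to bring in the Gaudin Hamiltonians and, more importantly, the full \emph{Bethe algebra} $\mathcal{B}$ --- a commutative subalgebra of $\mathrm{End}(W)$ --- together with the Bethe ansatz, which manufactures joint eigenvectors of $\mathcal{B}$ out of critical points of a master function $\Phi(t;a)$. The key algebro-geometric input, due to Mukhin and Varchenko, is a dictionary: critical points of $\Phi$ (modulo the symmetry group) correspond bijectively to the points of the Schubert intersection, a critical point is nondegenerate precisely when the corresponding intersection point is transverse, and in fact $\mathrm{Spec}\,\mathcal{B}$ is isomorphic as a scheme to the intersection. Hence the theorem becomes equivalent to the assertion that, when the $a_j$ are real, $\mathcal{B}$ acts on $W$ semisimply with real spectrum and one-dimensional joint eigenspaces.

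The third step is the analytic one, and it is where the hypothesis $a_1,\dots,a_s\in\RP^1$ is used: the tensor product $W$ carries a canonical Shapovalov Hermitian form, which is \emph{positive definite}, and with respect to which every element of $\mathcal{B}$ is self-adjoint when the evaluation points are real. A commuting family of self-adjoint operators on a finite-dimensional Hilbert space is simultaneously diagonalizable with real eigenvalues and an orthogonal eigenbasis lying in the underlying real form; comparing this eigenbasis with the Bethe vectors, and invoking the dimension count of the first step, forces the Bethe vectors to be a basis of $W$ and each joint eigenspace to be one-dimensional. Translated back through the dictionary, this says precisely that $X_{\lambda_1}(a_1)\cap\dots\cap X_{\lambda_s}(a_s)$ is finite, reduced (hence transverse), and consists of real points.

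The main obstacle is \emph{completeness} of the Bethe ansatz --- that the eigenvectors produced from critical points of $\Phi$ actually span $W$, equivalently that $\mathcal{B}$ is a maximal commutative subalgebra of $\mathrm{End}(W)$ with simple joint spectrum. This is the technical heart of \cite{MTV1, MTV2}: it is obtained by a delicate analysis of the degeneration of $\Phi$ as the points $a_j$ coalesce, together with the self-adjointness above to rule out nontrivial Jordan blocks. By contrast, the passage between spaces of polynomials and weight spaces, and the identification of critical points with intersection points, are comparatively formal once the machinery is in place.
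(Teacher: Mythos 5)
The paper offers no proof of this statement: it is the Mukhin--Tarasov--Varchenko theorem, quoted from \cite{MTV1, MTV2} as external input (the entire paper is built on top of it), so there is no internal argument to compare yours against. Judged as an account of how the theorem is actually proved in those references, your sketch is faithful and hits the right landmarks: the translation of the Schubert intersection into spaces of polynomials with prescribed ramification at the $a_j$, and thence into a weight/singular-vector subspace $W$ of a tensor product of $\mathfrak{gl}_n$-modules; the Bethe algebra $\mathcal{B}$ and the identification of $\mathrm{Spec}\,\mathcal{B}$ with the scheme-theoretic intersection (which is what converts semisimplicity into reducedness, hence transversality --- you are right that nondegeneracy of critical points alone would not suffice); and the reality argument via self-adjointness of $\mathcal{B}$ with respect to the positive definite Shapovalov form when the evaluation points are real. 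You also correctly flag completeness of the Bethe ansatz / simplicity of the joint spectrum as the technical heart rather than pretending it is formal. Two small caveats: the points $a_j=\infty$ allowed in the statement require the osculating flag at infinity and are handled in MTV via singular vectors rather than a naive evaluation module, and the statement is really meant for distinct $a_j$ (for coincident points one first merges the Schubert conditions). As a self-contained proof your text would not stand, since every substantive step defers to the cited machinery; but given that the paper itself treats the theorem as a black box, your summary is at an appropriate level and contains no wrong turns.
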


We will also need some combinatorial notions from tableau theory.
If $\lambda, \mu$ are partitions, $\lambda \geq \mu$, let 
$\SYT(\lambda/\mu)$ denote the set of standard Young tableaux of
shape $\lambda/\mu$.  Suppose that $T \in \SYT(\lambda/\mu)$ and 
$U \in \SYT(\mu)$.  We can draw $T$ in red and $U$ in blue on the 
same diagram of shape $\lambda$, with $U$ ``inside'' of $T$.  
The basic jeu de taquin 
algorithm can be used to switch $U$ and $T$, so that we end up with
two new tableaux, $\hat{T}$ in red on the inside, and
$\hat{U}$ in blue on the outside.
\begin{enumerate}
\item Let $u$ be the largest entry in $U$.
\item Slide $u$ through $T$.
(If there are entries of $T$ to the right of $u$ and below
$u$, switch the smaller of these entries with $u$.  If only one
of these exists, switch it with $u$.  Repeat until $u$ has reached 
the ``outside'' of $T$.)
\item Let $u$ be the next largest entry in $U$, and repeat step (2)
until every entry of $U$ has been moved outside of $T$. 
\end{enumerate}
The resulting $\hat{T}$ is called the \defn{rectification} of $T$;
its shape is a partition, called the \defn{rectification shape} of $T$.
A theorem of Sch\"utzenberger states that $\hat{T}$ does not 
depend on on $U$~\cite{Sch}.  On the other hand, $\hat{U}$, may depend 
on $T$.  We say that $T$ and $T'$ are \defn{dual equivalent}, and write
$T \sim^* T'$, if $T$ and $T'$ produce the same $\hat{U}$ for
all (equivalently for some) $U \in \SYT(\mu)$.  Both versions of this
last definition are due to Haiman~\cite{Hai}.
It is worth noting that the dual equivalence relation $T \sim^* T'$ is 
quite different from
$\hat{T} = \hat{T'}$; in fact, if both are true then $T=T'$.
Dual equivalence classes on $\SYT(\lambda/\mu)$ are in bijection
with Littlewood-Richardson tableaux of shape $\lambda/\mu$; hence
statements involving Littlewood-Richardson numbers may be formulated in 
terms of counting dual equivalence classes.

If $T \in \SYT(\Rect)$, and $I$ is an 
interval, let $T_I$ denote the subtableau of $T$ consisting of entries 
in $I$.  We'll also sometimes write $T_{< i} := T_{[1,i)}$, 
$T_{\geq i} := T_{[i,n(n+1)]}$, etc.  $T_I$ is essentially a
standard Young tableau of some skew shape $\lambda/\mu$ --- the
definitions of rectification and dual equivalence make sense despite
the fact that the entries are
$\ZZ \cap I$ instead of $\{1, \dots, |\lambda/\mu|\}$.

Finally, let $A$ denote the set of $n(n{+}1)$-element multisets 
$\bolda = \{a_1, \dots, a_{n(n+1)}\}$, $a_1, \dots, a_{n(n+1)} \in \RP^1$.
There is a natural map $(\RP^1)^{n(n+1)} \to A$, $(a_1, \dots, a_{n(n+1)})
\mapsto \{a_1, \dots, a_{n(n+1)}\}$; we endow
$A$ with the quotient topology.  We will also need to refine the 
relation $|a| \leq |b|$, $a, b \in \RP^1$,
to a total order.  Any refinement will do, but for the sake of 
concreteness, define 
$a \preceq b$ if either $a = b$, $|a| < |b|$ or $0 < a = -b < \infty$.
Define a \defn{$\preceq$-zone} to be 
a subset of $A$ of the form
\[
 \big\{
 \{a_1 \preceq a_2 \preceq \dots \preceq a_{n(n+1)}\} \in A 
 \ \big|\  0 \leq a_i\epsilon_i \leq \infty
 \text{ for $i=1, \dots, n(n{+}1)$}
 \big\}\,,
\]
where $\epsilon_1, \dots, \epsilon_{n(n+1)} \in \{ \pm 1\}$.

\begin{theorem}
\label{thm:correspondence}
There is a unique map $\SYT(\Rect) \times A \to X$, denoted
$(T, \bolda) \mapsto x_T(\bolda)$,
with all of the following properties:
\begin{enumerate}
\item[(i)] For all $T \in \SYT(\Rect)$ and $\bolda \in A$, 
$x_T(\bolda) \in X(\bolda)$.
\item[(ii)] For all $\bolda \in A$, the map $T \mapsto x_T(\bolda)$ is 
surjective onto the fibre $X(\bolda)$.
If $\bolda$ is a set, i.e. $a_i \neq a_j$ for all $i \neq j$,
then it is also one to one.
\item[(iii)] For any $T \in \SYT(\Rect)$, the map $\bolda \mapsto
x_T(\bolda)$ is discontinuous at $\bolda$ only if 
$a_i = -a_j \notin \{0, \infty\}$ for some $i,j \in \{1, \dots, n(n{+}1)\}$.
More specifically, it is continuous on any $\preceq$-zone of $A$.
\item[(iv)]
Assume that $a_1 \preceq a_2 \preceq \dots \preceq a_{n(n+1)}$,
and that $a_i = a_{i+1} = \dots = a_j$.  Let $T \in \SYT(\Rect)$.
Then $x_T(\bolda) \in X_\lambda(a_i)$ where $\lambda$ is
the rectification shape of $T_{[i,j]}$.
\item[(v)]  Under the same hypotheses as (iv), let
$T, T' \in \SYT(\Rect)$ be two tableaux such that 
$T_{<i} = T'_{<i}$,
$T_{>j} = T'_{>j}$.
Then $x_T(\bolda) = x_{T'}(\bolda)$ if and only if
 $T_{[i,j]} \sim^* T'_{[i,j]}$.
\end{enumerate}
\end{theorem}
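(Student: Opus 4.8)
The plan is to prove the two halves — existence and uniqueness — separately, and to spend almost all the effort on uniqueness. Existence is essentially not new: the correspondence $(T,\bolda)\mapsto x_T(\bolda)$ was constructed in \cite{EG} (asymptotically) and \cite{Pur-Gr} (in general, using Theorem~\ref{thm:MTV}), and properties (i)--(v) are, after translation, either elementary consequences of Proposition~\ref{prop:Xschubertwronski}, Theorem~\ref{thm:MTV} and the flatness/finiteness of $\Wr$ \cite{EH}, or among the main theorems of \cite{Pur-Gr}. I would simply record the appropriate citations for the existence statement and for properties (ii), (iv), (v).

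For uniqueness, suppose $x$ and $x'$ are two maps $\SYT(\Rect)\times A\to X$ satisfying (i)--(v). I claim only (i) and (iii) are needed. I would show $x_T(\bolda)=x'_T(\bolda)$ for all $T\in\SYT(\Rect)$ and all $\bolda\in A$ by induction on the number $r$ of distinct values occurring in the multiset $\bolda$. The base case $r=1$ is immediate: if $\bolda$ consists of a single value $c$ with multiplicity $n(n{+}1)$, then by Proposition~\ref{prop:Xschubertwronski} together with a degree count, $X(\bolda)=X_{\Rect}(c)$, which is a single reduced point ($x=F_n(c)$); so by (i), both $x_T(\bolda)$ and $x'_T(\bolda)$ equal that point.

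For the inductive step, fix $\bolda$ with exactly $r+1\ge 2$ distinct values, and let $c_1,c_2$ be the two of smallest absolute value. I would choose a continuous path $\bolda(s)$, $s\in[0,1]$, lying entirely in one $\preceq$-zone, with $\bolda(1)=\bolda$ and $\bolda(0)$ obtained by sliding $c_1$ and $c_2$ together to $0$ — concretely, scale those two coordinates by $s$ and leave the rest fixed; one checks that this keeps the tuple sorted and within its sign sectors for all $s\in[0,1]$, and that $\bolda(0)$ has at most $r$ distinct values. Along the path every value of $\Wr$, namely $\prod_\ell (z+d_\ell(s))^{m_\ell}$, has only real roots, so by Proposition~\ref{prop:Xschubertwronski} each fibre $X(\bolda(s))$ is a disjoint union of Schubert intersections $X_{\mu_1}(d_1(s))\cap\cdots\cap X_{\mu_r}(d_r(s))$ (one for each way of distributing the root multiplicities among partitions $\mu_\ell\vdash m_\ell$), each of which is transverse and reduced by Theorem~\ref{thm:MTV}; hence $X(\bolda(s))$ is reduced with exactly $|\SYT(\Rect)|$ points. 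Combined with the flatness and finiteness of $\Wr$ \cite{EH}, this means $\{X(\bolda(s))\}_{s\in[0,1]}$ is a trivial $|\SYT(\Rect)|$-sheeted covering over the interval $[0,1]$. By (i) and (iii), $s\mapsto x_T(\bolda(s))$ and $s\mapsto x'_T(\bolda(s))$ are continuous sections of this covering; by the inductive hypothesis they agree at $s=0$; since $[0,1]$ is connected they therefore agree for all $s$, and in particular $x_T(\bolda)=x_T(\bolda(1))=x'_T(\bolda)$.

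The main obstacle is supplying the two geometric inputs that make this covering-space argument run: first, that fibres of the Wronski map over real-rooted polynomials are reduced — this is exactly where Theorem~\ref{thm:MTV} is indispensable, since without it a fibre could be non-reduced and the ``étale covering'' picture would collapse; and second, the (routine but fiddly) bookkeeping needed to realize the merge $c_1,c_2\to 0$ by a path that never leaves a single $\preceq$-zone while the remaining coordinates stay put and sorted. It is worth noting that properties (ii), (iv), (v) play no role in this uniqueness argument — they appear in the statement because they are the properties one actually wants to exploit in later sections — and I would verify or cite them separately as part of the existence half.
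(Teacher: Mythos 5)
Your uniqueness argument contains a genuine gap, and in fact the claim that properties (i) and (iii) alone suffice for uniqueness cannot be correct: if $\phi$ is any bijection of $\SYT(\Rect)$, the map $(T,\bolda)\mapsto x_{\phi(T)}(\bolda)$ still satisfies (i), (ii) and (iii), so these properties can never pin down which tableau labels which point of a fibre. Some use of (iv) (and of the injectivity in (ii)) is unavoidable, and the paper's proof uses both. The concrete failure in your covering-space argument is at $s=0$: the family $\{X(\bolda(s))\}_{s\in[0,1]}$ is \emph{not} a trivial covering of the interval. For a multiset with distinct values $d_\ell$ of multiplicities $m_\ell$, the set-theoretic fibre has $\int_X\prod_\ell\bigl(\sum_{\mu\vdash m_\ell}[X_\mu]\bigr)$ points, which is strictly smaller than $|\SYT(\Rect)|$ as soon as some $m_\ell>1$ (the scheme-theoretic fibre still has degree $|\SYT(\Rect)|$ by flatness, but it is non-reduced). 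So when $c_1,c_2$ merge at $s=0$, points of the fibre collide: $s=0$ is a branch point. Two continuous sections that agree at a branch point need not agree nearby (compare $\pm\sqrt{s}$ over $[0,1]$ for $z\mapsto z^2$), so agreement at $s=0$ supplied by your inductive hypothesis does not propagate to $s>0$. Your induction therefore does not close.

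The paper's uniqueness argument runs the path in the opposite, and correct, direction: it reduces to the case where $\bolda$ is a set, starts at $s=1$ where (ii) gives injectivity and hence \emph{unique} path lifting over $(0,1]$, follows the paths $\bolda_{t,k}$ (scaling the $k$ smallest entries to $0$) down to the degenerate endpoint, and then uses (iv) to read off the shape of $T_{\leq k}$ as the largest $\lambda$ with $x_{0,k}\in X_\lambda(0)$; knowing these shapes for all $k$ recovers $T$. That is, the value at the branch point is determined by the section away from it — not the other way around. One further caveat on your existence half: the forward direction of (v) is not a citation but requires an argument (the paper counts $\sim^*_{\bolda}$-equivalence classes via the Littlewood--Richardson rule and matches this against the number of points in $X(\bolda)$ given by Theorem~\ref{thm:MTV}), and extending the map of \cite{Pur-Gr} from the locus $0<|a_1|<\dots<|a_{n(n+1)}|<\infty$ to all of $A$ requires the continuity/flatness extension and the consistency statement of \cite[Theorem 4.5]{Pur-Gr}.
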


\begin{proof}
The ``existence'' part of this theorem is mainly a summary of several 
of the results in~\cite{Pur-Gr}.  There, a map satisfying (i) and (ii) 
is constructed for
\[
  \{\bolda \in A \mid
  0 < |a_1| < |a_2| < \dots < |a_{n(n+1)}| < \infty
  \}
\]
\cite[Corollary 4.10]{Pur-Gr}; it is continuous on that 
disconnected domain.  
Since $\Wr$ is flat and finite, we can extend this to a
continuous map on any single $\preceq$-zone, and
(ii) will still hold.  If $0,\infty \notin \bolda$, then
$\bolda$ is in a unique $\preceq$-zone, and this defines $x_T(\bolda)$
unambiguously.  Otherwise, $\bolda$ is in 
more than one $\preceq$-zone, and we need
\cite[Theorem 4.5]{Pur-Gr} to see that $x_T(\bolda)$ is
well-defined.  Thus the original correspondence can
be extended to all $\bolda \in A$ in such a way that (i)--(iii) hold.  

Statement (iv) and the $\Longleftarrow$ direction of (v) are the content of 
\cite[Theorem 6.4]{Pur-Gr}.   


As for the $\Longrightarrow$
direction of (v), suppose that
\[
  a_1 = \dots = a_{i_1} \prec a_{i_1+1} = \dots = a_{i_2} \prec
  \dots \prec a_{i_m+1} = \dots = a_{n(n+1)}\,.
\]
Let $\sim^*_\bolda$ be the equivalence relation
on $\SYT(\Rect)$ defined by $T \sim^*_\bolda T'$ if and only if
$T_{(i_l,i_{l+1}]} \sim^* T'_{(i_l,i_{l+1}]}$ for all $l = 0, 1, \dots, m$.
From (ii) and the $\Longleftarrow$ direction of (v), we know that 
$T \mapsto x_T(\bolda)$
is surjective onto $X(\bolda)$ and constant on the equivalence 
classes of $\sim^*_\bolda$.
The Littlewood-Richardson rule 
tells us that the number of $\sim^*_\bolda$ equivalence classes in 
$\SYT(\Rect)$ is equal to
\[
   \int_X \  \prod_{l=0}^m \bigg(
       \sum_{\lambda \vdash (i_{l+1}{-}i_l)} [X_\lambda]
   \bigg)\,.
\]
The transversality statement in Theorem~\ref{thm:MTV},
interpreted through Proposition~\ref{prop:Xschubertwronski},
asserts that this is exactly the number of points in $X(\bolda)$.  
Thus there
cannot be two equivalence classes of $\sim^*_\bolda$ that map to the
same point in $X(\bolda)$.

It remains to show uniqueness.
By the continuity property (iii), it is enough
to show that the inverse map $X(\bolda) \to \SYT(\Rect)$ is 
determined by properties (i)--(iv), in the case where $\bolda$ is
a set.  Assume that
\[
  a_1 \prec a_2 \prec \dots \prec a_{n(n+1)}\,,
\]
and let $x \in X(\bolda)$.  We will prove that from $x$, one can uniquely
determine the tableau $T$ such that $x = x_T(\bolda)$.

Let
$\bolda_{t,k} = \{ta_1, \dots, ta_k, a_{k+1}, \dots, a_{n(n+1)}\}$
for $t \in [0,1]$, $k \in \{1,\dots,n(n+1)\}$.  
By (ii) the map $T \mapsto x_T(\bolda_{t,k})$ is one to
one for all $t \in (0,1]$.  Thus there is a unique lifting of 
the path $t \mapsto \bolda_{t,k} \in A$, $t \in [0,1]$, to a path
 $t \mapsto x_{t,k} \in X(\bolda_{t,k})$,
with $x_{1,k} = x$.
By (iii), the
map $t \mapsto x_T(\bolda_{t,k})$ is also continuous on $[0,1]$,
and so we see that if $T$ is the tableau such that 
$x_T(\bolda) = x$, then $x_T(\bolda_{t,k}) = x_{t,k}$, for
all $t \in [0,1]$.  In particular, $x_T(\bolda_{0,k}) = x_{0,k}$.

Now, since $\bolda_{0,k}$ contains $0$ with multiplicity 
$k$,
by (iv) we have $x_T(\bolda_{0,k}) \in X_\lambda(0)$ where
$\lambda$ is the shape of $T_{\leq k}$.  Moreover, since $0$ does
not have multiplicity greater than $k$ in $\bolda_{0,k}$, we
cannot have $x_T(\bolda_{0,k}) \in X_\mu(0)$ for any 
$\mu > \lambda$.  It follows that the tableau $T$ such that
$x =x_T(\bolda)$ must have the property that
the shape of $T_{\leq k}$ is
the largest partition $\lambda$ such that 
$x_{0,k} \in X_\lambda(0)$.   Since this is true
for all $k \in \{1, \dots, n(n{+}1)\}$,
we have determined $T$, as required.
\end{proof}

\begin{remark} \rm
The fact that the discontinuities of the map $\bolda \mapsto x_T(\bolda)$
are at points where $a_i = -a_j$ for some $i,j$
has no particular geometric
significance: the fibres of the Wronski map are as well behaved at these
points as any.  However, the uniqueness of the map in 
Theorem~\ref{thm:correspondence} shows that it is 
impossible
to produce a continuous correspondence satisfying (i), (ii) and (iv).  
Since these are highly desirable properties, we are forced to have jump
discontinuities somewhere, and the points for which $a_i = -a_j$ are a 
fairly obvious and convenient choice for where to put them.
\end{remark}

\medskip

\section{Tableaux and points of $Y$}
\label{sec:proofmain}

\medskip

Our goal in this section is to prove Theorem~\ref{thm:Ysymm}.
We begin by recalling some of the relevant results 
from~\cite{Pur-OG}.

Let $\Sigma$ denote the set of all strict partitions
$\sigma :  (\sigma^1> \sigma^2 > \dots > \sigma^k)$,  with
$\sigma^1 \leq n$, $\sigma^k > 0$, $k \leq n$.
The diagram of $\sigma$ contains $\sigma_j$ boxes in the $j$\nth
row, with the leftmost box shifted $j-1$ boxes to the right.
A \defn{standard shifted tableau} of shape $\sigma$ is a filling of
the diagram of $\sigma$ with entries $1,\dots,|\sigma|$;
the entries must increase downwards and to the right; we write
$\Shift(\sigma)$ for the set of all standard shifted tableaux of 
shape $\sigma$.  
We will be particularly concerned with $\Shift(\Staircase)$, where
$\Staircase : (n > n-1 > \dots > 2 > 1)$ denotes
the largest strict partition in $\Sigma$.

The input data for a Schubert variety in $Y$ are a strict partition
$\sigma \in \Sigma$ and a flag $F_\bullet$ satisfying 
$\< F_i, F_{2n+1-i}\> = \{0\}$ for all $i=0, 1, \dots, 2n{+}1$; the 
flags $F_\bullet(a)$ 
satisfy this condition.
For our purposes, the most convenient way to define Schubert
varieties in $Y$ is in terms of the Schubert varieties in $X$.
For each strict partition
$\sigma \in \Sigma$, define a partition
$\widetilde \sigma \ :\ (\widetilde \sigma^1 \geq \widetilde \sigma^2 \geq 
\dots \geq 
\widetilde \sigma^n)$,
\[
  \widetilde \sigma^i := 
  \sigma^i + \#\{j  \mid j \leq i < j+\sigma^j\} \,.
\] 
The \defn{Schubert variety} in $Y$
relative to the flag $F_\bullet(a)$ is 
\[
  Y_\sigma(a) := Y \cap X_{\widetilde\sigma}(a)\,.
\]
$Y_\sigma(a)$ has 
codimension $|\sigma|$ in $Y$;
we denote its cohomology class by $[Y_\sigma] \in H^{2|\sigma|}(Y)$.

\begin{figure}[tb]
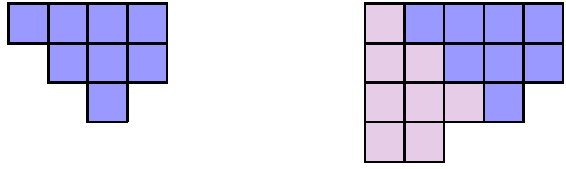

\begin{center}
\ysetshade{Blue!40}
\ysetaltshade{Purple!20}
\begin{young}[t]
! & ! & ! & ! \\
, & ! & ! & ! \\
, & , & !  \\
\end{young}
\qquad\qquad\qquad
\begin{young}[t]
? & ! & ! & ! & ! \\
? & ? & ! & ! & ! \\
? & ? & ? & !  \\
? & ? 
\end{young}
\end{center}
\caption{The strict partition $\sigma : (4 > 3 > 1)$ (left) and the associated 
partition $\widetilde\sigma : (5\geq 5\geq3 \geq 2\geq 2)$ (right).}
\label{fig:strict-symmetric}.
\end{figure}

Figure~\ref{fig:strict-symmetric} shows an example of a strict
partition $\sigma$ and the associated partition $\widetilde\sigma$.
The diagram of $\widetilde\sigma$ always decomposes as a copy
of the diagram of $\sigma$ and its ``transpose'', exhibiting the
same type of symmetry as a symmetrical standard young tableau.

If $\bolda = \{a_1, a_2, \dots, a_m\}$ is a multiset, let
$\bolda^\star := \{a_1, a_1, a_2, a_2, \dots, a_m, a_m\}$.
Let $A^\star \subset A$ denote the subspace of multisets
of the form $\bolda^\star$, $\bolda = \{a_1, \dots, a_{n(n+1)/2}\}$.
As mentioned in the
introduction, $\Wr(y;z)$ is always a perfect square for $y \in Y$,
hence $Y \cap X(\bolda) = \emptyset$ if $\bolda \notin A^\star$.
If $\bolda = \boldb^\star \in A^\star$, let
\[
   Y(\boldb) := Y \cap X(\boldb^\star)\,,
\]  
This will be the analogue of the fibre of the Wronski map for $Y$.

The next two results from~\cite{Pur-OG} 
are analogues of Proposition~\ref{prop:Xschubertwronski} and
Theorem~\ref{thm:MTV}.

\begin{proposition}
\label{prop:Yschubertwronski}
Let $y \in Y$.  $\Wr(y;z)$ is divisible by $(z+a)^{2k}$ if and only
if $y \in Y_\sigma(a)$ for some strict partition $\sigma \vdash k$.
\end{proposition}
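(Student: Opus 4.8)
The plan is to deduce this from the corresponding statement for $X$ (Proposition~\ref{prop:Xschubertwronski}), together with the fact that $\Wr(y;z)$ is a perfect square for $y \in Y$ and the dictionary between strict partitions $\sigma$ and the associated partitions $\widetilde\sigma$. For the ``if'' direction, suppose $y \in Y_\sigma(a)$ for some strict partition $\sigma \vdash k$. By definition $Y_\sigma(a) = Y \cap X_{\widetilde\sigma}(a)$, and $|\widetilde\sigma| = 2|\sigma| = 2k$ (this is immediate from the formula for $\widetilde\sigma^i$: each box $j$ of $\sigma$ contributes $\sigma^j$ to $|\sigma|$ and also gets counted once in the ``transpose'' correction for each row $i$ with $j \leq i < j + \sigma^j$, i.e. $\sigma^j$ times, so $|\widetilde\sigma| = \sum_j \sigma^j + \sum_j \sigma^j = 2|\sigma|$). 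So by Proposition~\ref{prop:Xschubertwronski}, $\Wr(y;z)$ is divisible by $(z+a)^{2k}$.

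For the ``only if'' direction, suppose $\Wr(y;z)$ is divisible by $(z+a)^{2k}$. Since $y \in Y$, $\Wr(y;z)$ is a perfect square, say $\Wr(y;z) = g(z)^2$; divisibility by $(z+a)^{2k}$ then means $g(z)$ is divisible by $(z+a)^k$, i.e. $\Wr(y;z)$ is divisible by $(z+a)^{2k}$ but the precise power of $(z+a)$ dividing it is even. By Proposition~\ref{prop:Xschubertwronski}, $y \in X_\mu(a)$ for some partition $\mu \vdash 2k$; enlarging if necessary, let $\mu$ be the \emph{largest} such partition, so that $|\mu|$ equals the exact multiplicity of the root $-a$ of $\Wr(y;z)$ (by the same proposition, $y$ does not lie in $X_\nu(a)$ for $|\nu| > |\mu|$). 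The key point is then to show that this maximal $\mu$ is of the form $\widetilde\sigma$ for a strict partition $\sigma$; once this is established, $|\sigma| = k$ and $y \in Y \cap X_{\widetilde\sigma}(a) = Y_\sigma(a)$, as desired.

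The main obstacle is thus the combinatorial/geometric claim that for $y \in Y$ the maximal Schubert condition $\mu$ satisfied at $a$ is always of the special ``self-transpose'' form $\widetilde\sigma$. I expect this to follow from the structure of isotropic subspaces: the flag $F_\bullet(a)$ satisfies $\langle F_i(a), F_{2n+1-i}(a)\rangle = \{0\}$, so for an isotropic $y$, the jump sequence of $\dim(y \cap F_\bullet(a))$ is constrained by the pairing — if $y$ meets $F_i(a)$ in a subspace of dimension $d$, then $y + F_{2n+1-i}(a)$ has dimension at most $2n+1 - d$ from isotropy, which forces a symmetry on the dimensions $\dim(y \cap F_j(a))$ that is exactly the combinatorial shadow of $\mu = \widetilde\sigma$. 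Concretely, I would translate the Schubert datum for $\mu$ into the sequence of indices $i$ at which $\dim(y \cap F_i(a))$ jumps, show the isotropy of $y$ makes this index set symmetric under $i \mapsto 2n+1-i$ in the appropriate sense, and check that index sets with this symmetry are precisely those arising from $\widetilde\sigma$. (This is, in effect, the classical description of Schubert varieties in the orthogonal Grassmannian via strict partitions, here phrased through the $X$-to-$Y$ embedding $\sigma \mapsto \widetilde\sigma$; alternatively one can simply cite~\cite{Pur-OG} for the needed facts about the geometry of $Y$ and the bijection $\sigma \leftrightarrow \widetilde\sigma$.) The remaining verifications — that $|\widetilde\sigma| = 2|\sigma|$ and that $\widetilde{\phantom{\sigma}}$ is a bijection onto self-transpose partitions — are routine bookkeeping with the definition of $\widetilde\sigma^i$.
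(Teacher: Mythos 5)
First, a point of reference: this paper does not prove Proposition~\ref{prop:Yschubertwronski} at all --- it is imported from~\cite{Pur-OG} (``The next two results from~\cite{Pur-OG} are analogues of \dots''), so there is no in-paper argument to compare against and you are in effect reconstructing a proof from that reference. With that understood: your ``if'' direction is complete and correct; the computation $|\widetilde\sigma|=2|\sigma|$ is exactly right, and Proposition~\ref{prop:Xschubertwronski} finishes it. The ``only if'' direction, however, rests entirely on the one claim you do not prove: that for isotropic $y$ the maximal partition $\mu$ with $y\in X_\mu(a)$ is of the form $\widetilde\sigma$ for a strict partition $\sigma$. Everything else in that direction is routine, so this claim essentially \emph{is} the proposition, and sketching it as ``isotropy forces a symmetry on the jump sequence'' leaves the real work undone. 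To close it you would need the precise inequality --- since $F_{2n+1-i}(a)=F_i(a)^\perp$ and $y\subseteq y^\perp$, one gets $\dim\big(y\cap F_{2n+1-i}(a)\big)\ge n-i+\dim\big(y\cap F_i(a)\big)$ --- followed by a genuine combinatorial verification that jump sequences obeying this constraint are exactly those encoded by the partitions $\widetilde\sigma$. Be careful with the phrase ``self-transpose'': $\widetilde\sigma$ is \emph{not} self-conjugate as a partition (for $\sigma:(4>3>1)$, $n=4$, one gets $\widetilde\sigma=(5,5,4,2)$, whose conjugate is $(4,4,3,3,2)$); the correct statement is that its diagram is the union of the shifted diagram of $\sigma$ with the transpose of that shifted diagram, and matching this to the dimension constraints is a nontrivial step, not bookkeeping. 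Citing~\cite{Pur-OG} for it is legitimate, but then you have simply re-cited the source of the whole proposition.

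Two smaller repairs are also needed. If the exact order of vanishing of $\Wr(y;z)$ at $-a$ is $2k'$ with $k'>k$, your argument produces a strict partition of $k'$, not of $k$; you must then pass to some strict $\sigma\le\sigma'$ with $|\sigma|=k$ (always possible by deleting corner boxes of the shifted diagram one at a time) and observe that $\sigma\le\sigma'$ implies $\widetilde\sigma\le\widetilde{\sigma'}$, hence $X_{\widetilde{\sigma'}}(a)\subseteq X_{\widetilde\sigma}(a)$ and so $Y_{\sigma'}(a)\subseteq Y_\sigma(a)$. And the existence of a unique maximal $\mu$ with $y\in X_\mu(a)$, with $|\mu|$ equal to the exact multiplicity, should be justified from the jump sequence of $i\mapsto\dim\big(y\cap F_i(a)\big)$; Proposition~\ref{prop:Xschubertwronski} as stated only produces \emph{some} $\mu\vdash 2k$.
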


\begin{theorem}
\label{thm:YMTV}
If $b_1, \dots b_s \in \RP^1$ are distinct real points, and
$\sigma_1, \dots \sigma_s \in \Sigma$, with
$|\sigma_1| + \dots + |\sigma_s| = \dim Y$, then the
intersection
$$Y_{\sigma_1}(b_1) \cap \dots \cap Y_{\sigma_s}(b_s)$$
is finite, transverse, and every point in the intersection is real.
\end{theorem}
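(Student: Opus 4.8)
The plan is to deduce Theorem~\ref{thm:YMTV} from the corresponding statement for $X$, namely Theorem~\ref{thm:MTV}, by a dimension-count together with the fact (from \cite{Pur-OG}) that $\Wr(y;z)$ is a perfect square for $y \in Y$. The key geometric input is that each Schubert variety $Y_{\sigma_i}(b_i)$ is by definition $Y \cap X_{\widetilde\sigma_i}(b_i)$, so the intersection in question is $Y \cap \bigcap_i X_{\widetilde\sigma_i}(b_i)$. The naive obstacle is that $\sum_i |\sigma_i| = \dim Y$ is \emph{not} $\dim X$ (indeed $\dim Y = \binom{n+1}{2}$ while $\dim X = n(n{+}1)$), so I cannot directly invoke Theorem~\ref{thm:MTV} for the flags $F_\bullet(b_i)$ alone; the codimensions $|\widetilde\sigma_i|$ do not add up to $\dim X$ either. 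The trick is to exploit the ``doubling'' structure: since $|\widetilde\sigma| = |\sigma| + \#\{\text{transpose boxes}\}$ and the diagram of $\widetilde\sigma$ decomposes symmetrically, one checks the arithmetic identity $2|\sigma| = |\widetilde\sigma| + (\text{something})$; more precisely I expect $\sum_i |\widetilde\sigma_i|$ to be smaller than $\dim X$, with the deficit exactly accounted for by imposing, at each $b_i$, the additional condition coming from squaring.

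Concretely, first I would recall from Proposition~\ref{prop:Yschubertwronski} and Proposition~\ref{prop:Xschubertwronski} that for $y \in Y$, $y \in X_{\widetilde\sigma}(a)$ forces $\Wr(y;z)$ to be divisible by $(z+a)^{|\widetilde\sigma|}$, but being a perfect square this divisibility upgrades: the multiplicity of the root $-a$ in $\Wr(y;z)$ must be even, hence at least the smallest even integer $\geq |\widetilde\sigma|$, and in fact (via Proposition~\ref{prop:Yschubertwronski}) exactly $2|\sigma|$ when $y$ lies in $Y_\sigma(a)$ but no larger $Y_\tau(a)$. So a point $y$ in the intersection $\bigcap_i Y_{\sigma_i}(b_i)$ has $\Wr(y;z)$ divisible by $\prod_i (z+b_i)^{2|\sigma_i|}$, a polynomial of degree $2\sum_i|\sigma_i| = 2\dim Y$. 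I would then reconcile $2 \dim Y = n(n{+}1) = \dim X$ — this is the crucial numerical coincidence — so that $\Wr(y;z) = c\prod_i(z+b_i)^{2|\sigma_i|}$ up to scalar; that is, $y \in X(\boldb^\star)$ where $\boldb = \{b_1^{|\sigma_1|}, \dots\}$ suitably. In other words the intersection is contained in $Y(\boldb) = Y \cap X(\boldb^\star)$.

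Next, having located the intersection inside a single Wronski fibre, I would appeal to Theorem~\ref{thm:correspondence}(iv): writing the multiset $\boldb^\star$ in $\preceq$-order with repeated blocks, the points of $X(\boldb^\star)$ are the $x_T(\boldb^\star)$ for $T \in \SYT(\Rect)$, and $x_T(\boldb^\star)$ lies in $X_{\widetilde\sigma_i}(b_i)$ precisely when the rectification shape of the relevant block $T_{[p,q]}$ (the $2|\sigma_i|$ consecutive entries equal to $b_i$) dominates $\widetilde\sigma_i$. Combined with the main theorem of this paper, Theorem~\ref{thm:Ysymm} — which says $x_T(\boldb^\star) \in Y$ iff $T$ is symmetrical — I get an explicit combinatorial description of $\bigcap_i Y_{\sigma_i}(b_i)$ as a set of symmetrical tableaux. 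Counting these and matching against $\int_Y \prod_i [Y_{\sigma_i}]$ gives that the intersection number equals its set-theoretic cardinality, which forces both finiteness and transversality (any excess intersection or non-reduced point would make the set-count strictly smaller than the cohomological product); reality of each point follows since $x_T(\boldb^\star)$ is real (having a basis in $\RR[z]$) by the corresponding reality in Theorem~\ref{thm:MTV}, or directly from Theorem~\ref{thm:MTV} applied to $X$ with the doubled data.

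The main obstacle I anticipate is the transversality claim: knowing the \emph{set} $Y_{\sigma_1}(b_1) \cap \dots \cap Y_{\sigma_s}(b_s)$ has the expected cardinality does not by itself give transversality of the intersection \emph{as subschemes of $Y$}, because $Y$ sits inside $X$ and the relevant tangent-space computation is intrinsic to $Y$. The resolution I would pursue: the intersection $\bigcap_i X_{\widetilde\sigma_i}(b_i)$, augmented by the ``squaring'' conditions, is transverse in $X$ by Theorem~\ref{thm:MTV} (applied to the doubled partition data with total codimension $\dim X$), and one shows that $Y$ meets this transverse intersection transversally by using that $Y$ itself is cut out in $X$ in a way compatible with the bilinear form — essentially, at a point $y$ the tangent space $T_yY$ is the $\langle\cdot,\cdot\rangle$-orthogonal-compatible subspace of $T_yX$, and the Schubert conditions defining $Y_{\sigma_i}$ are ``half'' of those defining the doubled $X_{\widetilde\sigma_i}$. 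Making this last compatibility precise — matching $T_y Y \cap \bigcap T_y Y_{\sigma_i}(b_i) = 0$ with the transverse vanishing in $X$ — is the technical heart, and I would expect to carry it out by reducing to a statement about the form $\langle\cdot,\cdot\rangle$ on the quotient $F_i(b)/F_{i-1}(b)$, or alternatively cite the appropriate transversality lemma from \cite{Pur-OG}.
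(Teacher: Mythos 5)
The paper does not prove Theorem~\ref{thm:YMTV} at all: it is quoted from \cite{Pur-OG}, where it is established by an argument intrinsic to $\OGn$. Your attempt to derive it instead from the results of the present paper runs into a fatal circularity. You invoke Theorem~\ref{thm:Ysymm} to enumerate the points of $Y_{\sigma_1}(b_1)\cap\dots\cap Y_{\sigma_s}(b_s)$ by symmetrical tableaux, but the proof of Theorem~\ref{thm:Ysymm} rests on the equality $|Y(\boldb)| = |\Shift(\Staircase)|$, which is obtained precisely from Theorem~\ref{thm:YMTV} together with Proposition~\ref{prop:Yschubertwronski} (the injection into symmetrical tableaux only gives an upper bound; the matching lower bound is the transversality statement you are trying to prove). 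Likewise, identifying your tableau count with $\int_Y \prod_i[Y_{\sigma_i}]$ is essentially the Littlewood--Richardson rule for $\OGn$ (Theorem~\ref{thm:YLR}), which the paper again deduces \emph{from} Theorem~\ref{thm:YMTV}. So the enumeration step is not available as input here.

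The second gap is transversality, which you rightly flag as the technical heart but do not carry out. It cannot be inherited from Theorem~\ref{thm:MTV}: since $\mathrm{codim}_X X_{\widetilde\sigma}(a) = |\widetilde\sigma| \geq |\sigma| + \ell(\sigma) > |\sigma| = \mathrm{codim}_Y Y_\sigma(a)$ for $\sigma \neq \emptyset$, the subvariety $Y$ is never transverse to $X_{\widetilde\sigma}(a)$ inside $X$, and $\sum_i |\widetilde\sigma_i| < \dim X$, so $\bigcap_i X_{\widetilde\sigma_i}(b_i)$ is positive-dimensional and Theorem~\ref{thm:MTV} does not apply to it directly. Your replacement of $X_{\widetilde\sigma_i}(b_i)$ by the strata $X_{\lambda_i}(b_i)$ with $\lambda_i \vdash 2|\sigma_i|$ (using that $\Wr(y;z)$ is a perfect square and $2\dim Y = \dim X$) does soundly yield finiteness and reality of the intersection as a set, since it is then contained in the real, finite fibre $X(\boldb^\star)$. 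But the tangent-space statement --- that $T_yY_{\sigma_1}(b_1)\cap\dots\cap T_yY_{\sigma_s}(b_s)$ is zero inside $T_yY$ --- is a genuinely new computation, and your proposed resolution terminates in ``cite the appropriate transversality lemma from \cite{Pur-OG}'', which is exactly what the paper does for the entire theorem. In short: finiteness and reality can be salvaged along your lines, but transversality (and hence the point count) requires the external input of \cite{Pur-OG} and cannot be recovered from the results of this paper without circularity.
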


In the case where $b_1, \dots, b_{n(n+1)/2}$ are distinct real
numbers and $\sigma_i = \yng[1ex](1)$ for $i=1, \dots, \frac{n(n+1)}{2}$,
Proposition~\ref{prop:Yschubertwronski} and Theorem~\ref{thm:YMTV}
tell us that $|Y(\boldb)|$ is equal to the Schubert intersection
number $\int_Y [Y_{\yng[0.7ex](1)}]^{n(n+1)/2}$.  Basic Schubert
calculus then gives us
\[
  |Y(\boldb)| = |\Shift(\Staircase)|\,.
\]

We need one additional Lemma.

\begin{lemma}
\label{lem:Ycomponents}
Let 
$\boldb = \{0 \prec b_1 \prec \dots \prec b_{n(n+1)/2}\} \subset \RR$
be a set, and
let $B$ be the space of multisets
$\boldc = \{c_1 \preceq \dots \preceq c_{n(n+1)/2}\}$ such that
$b_ic_i \geq 0$ for all $i$.
\begin{enumerate}
\item[(i)]
If $T \in \SYT(\Rect)$ is a tableau such that
$x_T(\boldb^\star) \in Y$, then for $x_T(\boldc^\star) \in Y$ for all
$\boldc \in B$.
\item[(ii)]
If $x \in Y(\boldc)$ where $\boldc \in B$, then there exists a tableau 
$T$ such that $x = x_T(\boldc^\star)$ and $x_T(\boldb^\star) \in Y$.
\end{enumerate}
\end{lemma}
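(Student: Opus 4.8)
The plan is to deduce Lemma~\ref{lem:Ycomponents} from the continuity and characterization properties of the correspondence (Theorem~\ref{thm:correspondence}), together with the fact that $Y$ is a closed subvariety of $X$ and the dimension count $|Y(\boldb)| = |\Shift(\Staircase)|$. The key observation is that the multisets $\boldc^\star$ with $\boldc \in B$ all lie on a connected path within a single $\preceq$-zone of $A$: indeed, the deformation $c_i(t) := (1-t)b_i + t c_i$ keeps the sign of $c_i$ fixed (since $b_i c_i \geq 0$ and $b_i > 0$), keeps $c_1(t) \preceq c_2(t) \preceq \dots$, and hence stays in $B$. So $t \mapsto \boldc(t)^\star$ is a path in one $\preceq$-zone, along which $\bolda \mapsto x_T(\bolda)$ is continuous by property~(iii).

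First I would prove~(i). Fix $T$ with $x_T(\boldb^\star) \in Y$, and fix $\boldc \in B$. Along the path $t \mapsto \boldc(t)^\star$ from $\boldb^\star$ to $\boldc^\star$, property~(iii) tells us that $t \mapsto x_T(\boldc(t)^\star)$ is a continuous path in $X$; since $x_T(\boldc(0)^\star) = x_T(\boldb^\star) \in Y$ and $Y$ is closed in $X$, it would suffice to know that the path never leaves $Y$ --- but in fact the cleanest argument is to note that $x_T(\boldc(t)^\star)$ has Wronskian a perfect square for every $t$ (its roots are $\{-c_i(t)\}$, each with even multiplicity since $\boldc(t)^\star$ doubles each point), and to invoke the relevant result from~\cite{Pur-OG}. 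Actually the efficient route is: $x_T(\boldc^\star) \in X(\boldc^\star)$ with $\boldc^\star \in A^\star$, so whether it lies in $Y$ is controlled by a closed condition that is either satisfied on the whole $\preceq$-zone-path or fails generically; since it holds at $t=0$ and the map is continuous into the closed set $Y$, it holds at $t=1$. I would spell out the closedness of $Y \subseteq X$ and conclude $x_T(\boldc^\star) \in Y$.

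Next,~(ii). Let $\boldc \in B$ and $x \in Y(\boldc) = Y \cap X(\boldc^\star)$. By property~(ii) there is some $T$ with $x = x_T(\boldc^\star)$; the issue is to choose $T$ so that additionally $x_T(\boldb^\star) \in Y$. Here I would run the path $t \mapsto \boldc(t)^\star$ \emph{backwards}, from $\boldc^\star$ at $t=1$ to $\boldb^\star$ at $t=0$. The subtlety is that $\boldc$ may not be a set, so $T \mapsto x_T(\boldc^\star)$ need not be injective, and lifting the path requires care. The argument I would give: by Theorem~\ref{thm:YMTV} and the dimension count, $|Y(\boldb)| = |\Shift(\Staircase)|$, and the set $\{T \in \SYT(\Rect) : x_T(\boldb^\star) \in Y\}$ maps \emph{onto} $Y(\boldb)$ under $T \mapsto x_T(\boldb^\star)$ (by~(ii) applied to the fibre $X(\boldb^\star) \supseteq Y(\boldb)$). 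By part~(i), for every such $T$ and every $\boldc \in B$ we have $x_T(\boldc^\star) \in Y(\boldc)$; so the composite map $\{T : x_T(\boldb^\star)\in Y\} \to Y(\boldc)$, $T \mapsto x_T(\boldc^\star)$, lands in $Y(\boldc)$. To finish I would show this composite map is \emph{surjective} onto $Y(\boldc)$. For this, use continuity along the $\preceq$-zone path together with flatness/finiteness of the Wronski map to argue that the image of $Y(\boldb)$ under the (continuous, finite-to-one) ``transport'' along the path is all of $Y(\boldc)$: concretely, $\bigcup_{T} \{x_T(\boldc(t)^\star)\}$ traces out all of $X(\boldc(t)^\star)$ for each $t$, and the subset coming from $T$ with $x_T(\boldb^\star)\in Y$ traces out a closed subset of $Y(\boldc(t))$ containing $Y(\boldb)$ at $t=0$ and of constant cardinality, hence equal to $Y(\boldc(t))$ throughout --- in particular at $t=1$. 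Then any $x \in Y(\boldc)$ equals $x_T(\boldc^\star)$ for some $T$ in our distinguished set, which is exactly the claim.

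\textbf{Main obstacle.} The delicate point is part~(ii): when $\boldc$ has repeated entries the map $T \mapsto x_T(\boldc^\star)$ collapses dual-equivalence classes (property~(v)), so I cannot simply ``lift uniquely'' the path and read off $T$ at the endpoint. The honest fix is to track cardinalities: show that the number of $T$-classes giving points of $Y(\boldb)$ equals $|Y(\boldb)|$, that transport along the $\preceq$-zone path is surjective $Y(\boldb) \twoheadrightarrow Y(\boldc)$ and cannot increase (it is realized by continuous families out of finitely many tableaux), and that $|Y(\boldc)| \leq |Y(\boldb)|$ by semicontinuity of fibre cardinality under the finite flat map $\Wr$ restricted to $Y$. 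Combining these forces the transport map to hit every point of $Y(\boldc)$, which is what~(ii) asserts. Getting this cardinality bookkeeping exactly right --- and in particular justifying the semicontinuity inequality $|Y(\boldc)| \leq |Y(\boldb)|$ from properties of $\Wr|_Y$ --- is where the real work lies; everything else is a routine application of Theorem~\ref{thm:correspondence}(iii) and the closedness of $Y$ in $X$.
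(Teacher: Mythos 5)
For part (i) your argument does not actually establish the key propagation step. Closedness of $Y$ in $X$ only tells you that a limit of points of $Y$ lies in $Y$; it cannot push membership in $Y$ forward along the path $t\mapsto x_T(\boldc(t)^\star)$ from $t=0$ to $t=1$. Your ``cleanest argument'' --- that $x_T(\boldc(t)^\star)$ has a perfect-square Wronskian and therefore lies in $Y$ by a result of \cite{Pur-OG} --- is false: \cite{Pur-OG} proves only that points of $Y$ have square Wronskians, and the converse fails (otherwise Theorem~\ref{thm:Ysymm} would be vacuous, since every fibre $X(\bolda)$ with $\bolda\in A^\star$ would be contained in $Y$). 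The ``efficient route'' is likewise circular: you assume the section is ``continuous into the closed set $Y$'', which is the conclusion. What is needed, and what the paper supplies, is an \emph{openness} statement: over the locus $B^\circ\subset B$ of sets $\boldc$ with all entries distinct and nonzero, the fibre $X(\boldc^\star)$ is reduced and $Y(\boldc)$ is a finite subset of it varying continuously with $\boldc$; hence for a continuous section $s(\boldc)\in X(\boldc^\star)$ the condition $s(\boldc)\in Y$ is both open and closed on $B^\circ$, and connectedness (plus closedness of $Y$ to handle the boundary of $B$) finishes the argument. Without this input, (i) is incomplete.

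For part (ii) your forward-transport strategy has a gap that the proposed ``fix'' does not close. You want the map $T\mapsto x_T(\boldc^\star)$, restricted to $\{T: x_T(\boldb^\star)\in Y\}$, to be surjective onto $Y(\boldc)$. But knowing $|Y(\boldc)|\le |Y(\boldb)|$ and that the image lies in $Y(\boldc)$ does not force equality: the transport can collide points (as you yourself note, property (v) collapses dual-equivalence classes when $\boldc$ has repeated entries), so the image could a priori be a proper subset of $Y(\boldc)$ of strictly smaller cardinality; your assertion that the transported set has ``constant cardinality'' along the path is exactly what fails at the endpoint. The paper sidesteps this by running the path in the opposite direction \emph{inside} $Y$: starting from the given $x\in Y(\boldc)$, lift the path $\boldc_t$ from $\boldc$ to $\boldb$ to a (not necessarily unique) path $y_t\in Y(\boldc_t)$ with $y_0=x$, using that $Y(\cdot)$ is the fibre of a finite map; the endpoint $y_1$ equals $x_T(\boldb^\star)$ for some $T$, which by construction satisfies $x_T(\boldb^\star)\in Y$, and then continuity of $\boldc\mapsto x_T(\boldc^\star)$ together with reducedness of the fibres over the interior of the path identifies $y_t$ with $x_T(\boldc_t^\star)$, giving $x=y_0=x_T(\boldc^\star)$. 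You should either restructure (ii) along these lines or supply a genuine argument that every point of $Y(\boldc)$ is a limit of points of $Y(\boldc_t)$ already known to be of the form $x_T(\boldc_t^\star)$ for a distinguished $T$.
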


\begin{proof}
Let $B^\circ \subset B$ be the subspace of sets
$\boldc = \{c_1 \prec \dots \prec c_{n(n+1)/2}\}$ such that
$b_ic_i > 0$ for all $i$.
The fibre $X(\boldc^\star)$ is reduced if $\boldc \in B^\circ$, 
and $Y(\boldc)$ is a subset of $X(\boldc^\star)$ varying continuously
with $\boldc$.  Since $B$ is connected, any continuous section
$s:B \to X$, $s(\boldc) \in X(\boldc^\star)$ will either 
have its image
entirely in $Y$, or $s(B^\circ) \cap Y = \emptyset$.  In particular,
since $b \in B^\circ$, if $s(b) \in Y$ then the former occurs.
Statement (i) follows by applying this to the section
$\boldc \mapsto x_T(\boldc^\star)$, which 
is continuous on $B$ by Theorem~\ref{thm:correspondence}(iii).

For statement (ii), choose any path $\boldc_t \in B$, $t\in [0,1]$,
with $\boldc_0 = \boldc$, $\boldc_1 = \boldb$.
We can lift this (though not necessarily uniquely) to a path in 
$y_t \in Y(\boldc_t)$ such that $y_0 = x$.  Then 
$y_1 = x_T(\boldb^\star)$ for some tableau $T$, and since
$\boldc \mapsto x_T(\boldc^\star)$ is continuous on $B$, we also have
$x = y_0 = x_T(\boldc^\star)$.
\end{proof}

We now turn to our main result.

\begin{restatetheorem}{Theorem \ref{thm:Ysymm}}
Let $x \in X(\bolda)$, where $\bolda \in A^\star$.
Then $x \in Y$ if and only 
if there exists a symmetrical tableau $T \in \SYT(\Rect)$ such that 
$x = x_T(\bolda)$.
\end{restatetheorem}

\begin{proof}
Let $T \in \SYT(\Rect)$, and let 
$\boldb = \{0 \prec b_1 \prec \dots \prec b_{n(n+1)/2}\} \subset \RR$
be a set.  We'll first show that if $x_T(\boldb^\star) \in Y$, then
$T$ is symmetrical.  

First, we note that since each element of $\boldb^\star$ has 
multiplicity $2$.  By Theorem~\ref{thm:correspondence}(iv),
the rectification shape of $T_{[2k-1,2k]}$ must be is equal
to the $\lambda$, where $x_T(\boldb^\star) \in X_\lambda(b_k)$.
Since $x_T(\boldb^\star) \in Y$, $\lambda = \yng[1ex](2)\,$.
For $T$ to have rectification shape $\yng[1ex](2)\,$, 
entry $2k-1$ in $T$ must be strictly to the left of entry $2k$.

Now, for $k = \{1, \dots, \frac{n(n+1)}{2}\}$, let
$\boldb_k = \{0, \dots, 0, b_{k+1}, \dots, b_{n(n+1)/2}\}$.
By Lemma~\ref{lem:Ycomponents}(i), we have $x_T(\boldb_k^\star) \in Y$.
Since $\boldb_k^\star$ contains $0$ with multiplicity $2k$, 
by Theorem~\ref{thm:correspondence}(iv), shape of $T_{\leq 2k}$ must
be equal to the largest partition $\lambda$ such that 
$x_T(\boldb_k^\star) \in X_\lambda(0)$.  Since 
$x_T(\boldb_k^\star) \in Y$, by $\lambda = \widetilde{\sigma}$ for
some strict partition $\sigma$.

Putting these two facts together, we see that $T$ must be symmetrical.
This shows that we have an injective map from $Y(\boldb)$ to
the symmetrical tableaux in $\SYT(\Rect)$,
or equivalently to $\Shift(\Staircase)$.
Since the number of points in the fibre $Y(\boldb)$ is 
equal to $|\Shift(\Staircase)|$, this is a bijection.

Finally, let $\bolda$ be as in the statement of the theorem.  Then
$\bolda = \boldc^\star$, for some multiset $\boldc$.  
We can find a set $\boldb$ 
as above such that $\boldc \in B$, where $B$ is the set defined in
Lemma~\ref{lem:Ycomponents}.  If $T$ is symmetrical, then since
$x_T(\boldb^\star) \in Y$, by Lemma~\ref{lem:Ycomponents}(i) we have 
$x_T(\boldc^\star) \in Y$.  Conversely, if $x \in X(\bolda) \cap Y = 
Y(\boldc)$, 
then by Lemma~\ref{lem:Ycomponents}(ii) there exists a tableau $T$ such 
that $x = x_T(\bolda^\star)$
and $x_T(\boldb^\star) \in Y$.  As we've just shown, the last statement 
implies that $T$ is 
symmetrical.
\end{proof}

\medskip

\section{Shifted tableau theory}
\label{sec:shifted}

\medskip

The theory of shifted tableaux, as developed
in~\cite{Hai, Pra, Sag, Ste, Wor}, is parallel to the theory of Young
tableaux.  In particular, the jeu de taquin theory works in essentially
the same way.
Given strict partitions $\sigma \geq \tau$ in $\Sigma$, 
and shifted tableaux
$T \in \Shift(\sigma/\tau)$ and $U \in \Shift(\tau)$, 
the tableau switching algorithm outlined in 
Section~\ref{sec:correspondence} makes sense exactly as stated.
Thus we can define the rectification (and rectification shape)
of $T \in \Shift(\sigma/\tau)$
as well as the dual equivalence relation $\sim^*$ on $\Shift(\sigma/\tau)$.

As already noted in the introduction, for any shifted tableau 
$T \in \Shift(\Staircase)$, there is is
a corresponding symmetrical tableau $T^\star \in \SYT(\Rect)$,
characterized by the fact that deleting the odd entries 
of $T^\star$ gives $T$, with entries multiplied by $2$.  
This same definition makes sense for
any $T \in \Shift(\sigma/\tau)$ of skew shape, in which case
the corresponding $T^\star$ is a skew tableau in 
$\SYT(\widetilde\sigma/\widetilde\tau)$.

\begin{lemma}
\label{lem:shiftedsymmetric}
Let $T \in \Shift(\sigma/\tau)$ and $U \in \Shift(\tau)$.
Let $\hat T$ and $\hat U$ be the results of applying the tableau switching
algorithm to $T$ and $U$.  Let $\widehat{T^\star}$
and $\widehat{U^\star}$ be the results of applying the 
switching algorithm to $T^\star \in \SYT(\widetilde\sigma/\widetilde\tau)$
and $U^\star \in \SYT(\widetilde\tau)$.  Then
\[
   \widehat{T^\star} = (\hat T)^\star \qquad \text{and} \qquad
   \widehat{U^\star} = (\hat U)^\star\,.
\]
\end{lemma}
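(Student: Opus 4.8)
The plan is to prove the statement by induction on $|\tau|$, the number of entries of $U$, peeling off one entry at a time and tracking a single jeu de taquin slide through both $T$ and $T^\star$ simultaneously. The base case $\tau = \emptyset$ is vacuous: there is nothing to switch, $\hat T = T$, $\hat U = U$, and both equalities are trivially the definition of $(\cdot)^\star$. For the inductive step, let $u$ be the largest entry of $U$, sitting in some cell $c$ of the shifted diagram of $\tau$. In $U^\star$, the corresponding largest entry is $2u$, occupying the cell $c$ (in the shifted coordinates); the key first observation is that $2u$ is indeed the \emph{largest} entry of $U^\star$ that is in the ``inside'' position to start a switch — because all odd entries of $U^\star$ that are larger than $2u$ correspond to entries of $T^\star$ that came from odd-labeled cells of the symmetrization, and by the symmetry $i_{2k}=j_{2k-1}$, $j_{2k}=i_{2k-1}+1$ the cell of $2u-1$ lies immediately, so to speak, ``paired'' with $2u$. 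I would make this precise by checking that in $T^\star$, entry $2u-1$ always lies in the cell just to the left of (the cell that $2u$ is about to vacate in) $U^\star$, so that when the switching algorithm reaches $2u$, the odd entry $2u-1$ has already been slid out and sits on the boundary, and the two slides — of $2u-1$ and then $2u$ through $T^\star$ — together mirror the single slide of $u$ through $T$.

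The heart of the argument is therefore a \emph{local} lemma: one slide of $u$ through $T$, when translated via $(\cdot)^\star$, is exactly the composition of two consecutive slides in $T^\star$ (first the odd entry $2u-1$, then the even entry $2u$), and the resulting positions are again related by symmetrization. The reason this works is the self-transposing nature of the shape $\widetilde\sigma$ described in the paragraph before Figure~\ref{fig:strict-symmetric}: the symmetrical filling is built so that a cell $(i,j)$ with $i \le j$ in the ``$\sigma$-part'' is matched with $(j-1, i+1)$... more precisely $(j, i+1)$ in the transposed part. A slide of $u$ in the shifted diagram moves $u$ either right or down (or turns a diagonal corner); each such elementary move corresponds, on the $\widetilde\sigma$ side, to moving $2u$ one step and $2u-1$ one step in the mirrored direction, and the comparison ``switch the smaller of the entry below and the entry to the right'' in $T$ corresponds — because entries come in consecutive pairs $2v-1, 2v$ and the layout is symmetric — to the same comparison being made (twice, once for $2u-1$ and once for $2u$) in $T^\star$. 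I would verify this elementary-move correspondence by a short case analysis on whether the slide in $T$ goes right, goes down, or passes through a diagonal cell; the diagonal case is the one requiring care, since that is where the $\sigma$-part and its transpose meet, but there the shifted-tableau slide rule (where a cell on the main diagonal has no cell below-left outside the shifted shape) matches up with the even entry $2u$ landing on the diagonal of $\widetilde\sigma$ and the odd entry $2u-1$ occupying the cell immediately right of it.

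Granting the local lemma, the induction closes cleanly: after switching out $u$ (resp.\ $2u-1$ and $2u$), we are left with $T$ having absorbed one cell and $U$ losing its largest entry, and simultaneously $T^\star$ having absorbed the two mirrored cells and $U^\star$ losing its two largest entries; the remaining data is again a symmetrization of the shifted data, so the inductive hypothesis applies and yields $\widehat{T^\star} = (\hat T)^\star$ and $\widehat{U^\star} = (\hat U)^\star$ after all entries are processed. I expect the main obstacle to be precisely the bookkeeping in the local lemma — making the ``$2u-1$ has already exited and sits adjacent to $2u$'' claim airtight, and handling the diagonal corner of the staircase shape, where the asymmetry between ``row'' and ``column'' in the shifted setting has to be reconciled with the genuine symmetry of $\widetilde\sigma$. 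Everything else is routine once the correspondence between a single shifted slide and a pair of ordinary slides is established.
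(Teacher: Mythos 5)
Your high-level strategy is the same as the paper's: a single shifted slide corresponds to a pair of mirrored slides in the symmetrized picture, and deleting the odd entries recovers the shifted switching algorithm (the paper disposes of this with a one-line ``one can easily check''). However, the setup of your local lemma contains concrete errors that would derail the case analysis. First, the switching algorithm processes the entries of $U^\star$ from \emph{largest to smallest}, so within each pair it is the even entry $2u$ that slides first and the odd entry $2u-1$ that slides second; you have this backwards throughout (``the two slides --- of $2u-1$ and then $2u$'', ``when the switching algorithm reaches $2u$, the odd entry $2u-1$ has already been slid out''). Second, the claim that $2u-1$ ``always lies in the cell just to the left of'' the cell of $2u$ is false: the two entries occupy the mirrored positions $(i,j)$ and $(j,i+1)$ with $i\geq j$, which are in general far apart; they are adjacent only for diagonal pairs $(i,i)$ and $(i,i+1)$. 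Third, your diagonal case has the parity roles reversed: in a symmetrical tableau the odd entries occupy the cells on or below the diagonal (including the cells $(i,i)$) and the even entries the cells strictly above it, so it is the odd entry that can sit on the diagonal with its even partner immediately to the right, not the other way around.

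The correct local statement, which is what the paper's ``easy check'' amounts to, is this: the even entry $2u$ slides first, entirely through the even (strictly-above-diagonal) part of $T^\star$, whose entries are exactly twice the corresponding entries of $T$; hence its path reproduces the path of $u$ in the shifted slide, provided it never steps from a cell $(r,r+1)$ down into the diagonal cell $(r+1,r+1)$. That proviso holds because the odd entry $2v-1$ in $(r+1,r+1)$ is paired with the even entry $2v$ in $(r+1,r+2)$, so column-strictness forces the even entry $2w$ in $(r,r+2)$ to satisfy $2w<2v$, hence $2w<2v-1$, and the slide goes right. Only then does $2u-1$ slide, through the odd part, along the mirror-image path, because each comparison it encounters is the mirror of one already resolved by $2u$. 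With the order of the two slides and the parity conventions corrected, your induction on $|\tau|$ goes through and coincides with the paper's argument.
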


\begin{proof}
One can easily check that each time we slide \emph{two} entries 
of $U^\star$ through
$T^\star$, the first never crosses below the diagonal,
and the second entry takes a path symmetrical to the first.
Thus if $T^\star$ and $U^\star$ are remain symmetrical throughout
the switching algorithm, and if we simply delete the odd entries,
we recover the switching algorithm for $T$ and $U$.
\end{proof}

From this observation, we can deduce facts about dual
equivalence for shifted tableaux from the corresponding facts about
standard Young tableaux.  For example:

\begin{proposition}
\label{prop:specialdualequiv}
Let $\tau \in \Sigma$.
Any two tableaux in $\Shift(\tau)$ are dual equivalent, as
are any two tableaux in $\Shift(\Staircase/\tau)$.
\end{proposition}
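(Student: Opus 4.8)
The plan is to transport both assertions to the unshifted setting via the map $T \mapsto T^\star$ and Lemma~\ref{lem:shiftedsymmetric}; there they become the familiar facts that $\SYT(\widetilde\tau)$ is a single dual equivalence class and that $\SYT(\Rect/\widetilde\tau)$ is a single dual equivalence class, the first being immediate and the second a one-line consequence of Schubert calculus on $\Gr(n,2n{+}1)$. The claim about $\Shift(\tau)$ in fact takes no work at all: a tableau of straight shifted shape, regarded as a tableau of shape $\tau/\emptyset$, has empty inner shape, so in the switching algorithm the tableau $U$ is empty and its image $\hat U$ is again empty irrespective of $T$; hence any two $T, T' \in \Shift(\tau)$ are dual equivalent, just as for ordinary straight shapes.

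For $\Shift(\Staircase/\tau)$ the key observation is that $\widetilde{\Staircase} = \Rect$. Indeed, substituting $\Staircase^j = n{+}1{-}j$ into $\widetilde{\sigma}^i = \sigma^i + \#\{j \mid j \leq i < j+\sigma^j\}$ gives $\#\{j \mid j \leq i < n{+}1\} = i$ for each $i \in \{1,\dots,n\}$, so $\widetilde{\Staircase}^i = (n{+}1{-}i)+i = n{+}1$. Hence, for every $T \in \Shift(\Staircase/\tau)$, the symmetrical tableau $T^\star$ lies in $\SYT(\Rect/\widetilde{\tau})$ (the shape is legitimate, as $\widetilde{\tau} \in \Lambda$, so $\widetilde{\tau} \subseteq \Rect$). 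I would then quote the unshifted statement: any two tableaux in $\SYT(\Rect/\widetilde{\tau})$ are dual equivalent. This holds because the number of dual equivalence classes in $\SYT(\Rect/\widetilde{\tau})$ equals the number of Littlewood--Richardson tableaux of shape $\Rect/\widetilde{\tau}$, i.e.\ $\sum_{\nu} c^{\Rect}_{\widetilde{\tau}\,\nu}$, and inside an $n \times (n{+}1)$ rectangle one has $c^{\Rect}_{\widetilde{\tau}\,\nu} = 1$ when $\nu$ is the complement of $\widetilde{\tau}$ in $\Rect$ and $c^{\Rect}_{\widetilde{\tau}\,\nu} = 0$ otherwise; so the sum equals $1$.

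To descend back to shifted tableaux, fix $T, T' \in \Shift(\Staircase/\tau)$ and an arbitrary $U \in \Shift(\tau)$, and let $V$, respectively $V'$, be the tableau occupying the $U$-slot after the switching algorithm is run on $T, U$, respectively on $T', U$; proving $V = V'$ for every such $U$ establishes $T \sim^* T'$. Now $T^\star, (T')^\star \in \SYT(\Rect/\widetilde{\tau})$ and $U^\star \in \SYT(\widetilde{\tau})$, so by Lemma~\ref{lem:shiftedsymmetric} switching $T^\star$ past $U^\star$ produces $V^\star$ in the $U^\star$-slot, while switching $(T')^\star$ past $U^\star$ produces $(V')^\star$. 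Since $T^\star \sim^* (T')^\star$ in $\SYT(\Rect/\widetilde{\tau})$, these agree: $V^\star = (V')^\star$, and as $W \mapsto W^\star$ is injective we conclude $V = V'$, as needed.

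I do not anticipate a real obstacle; the argument is a reduction rather than a computation. The points deserving care are the identity $\widetilde{\Staircase} = \Rect$ --- this is precisely what turns $\widetilde{\Staircase}/\widetilde{\tau}$ into a rectangle minus a partition, where the single dual equivalence class comes from the rectangle complement fact for Littlewood--Richardson numbers --- together with checking that Lemma~\ref{lem:shiftedsymmetric} is applied with $\sigma = \Staircase$ and to the correct tableaux, and that $W \mapsto W^\star$ is injective (it is, being inverted by ``delete the odd entries and halve''). Everything else is bookkeeping.
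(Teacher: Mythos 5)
Your proposal is correct and follows essentially the same route as the paper: the straight-shape case is immediate from the definition, and the skew case is transported via $T \mapsto T^\star$ and Lemma~\ref{lem:shiftedsymmetric} to the fact that $\SYT(\Rect/\widetilde\tau)$ is a single dual equivalence class. You simply make explicit some details the paper leaves implicit (the identity $\widetilde{\Staircase} = \Rect$ and the rectangle-complementation count justifying the unshifted fact, which the paper instead attributes to Haiman and to its earlier geometric results).
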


The analogous statement for $\mu \in \Lambda$ is that
any two tableaux in $\SYT(\mu)$ are dual
equivalent, as are any two tableaux in $\SYT(\Rect/\mu)$.  This,
and Proposition~\ref{prop:specialdualequiv} are proved 
combinatorially in~\cite{Hai}.
However, statements about dual equivalence for standard Young
tableaux have a geometric interpretation.  For example, the assertion
above follows from \cite[Lemma 6.1]{Pur-Gr} 
and \cite[Theorem 6.4]{Pur-Gr}; the equivalence of the two versions of
the definition
of $\sim^*$ given in Section~\ref{sec:correspondence} is part of the 
proof of \cite[Theorem 6.4]{Pur-Gr}.
This motivates us to outline a
short proof of Proposition~\ref{prop:specialdualequiv} by reduction.

\begin{proof}
The statement for $\Shift(\tau)$ is immediate from the definition
of dual equivalence.  For $\Shift(\Staircase/\tau)$,
Lemma~\ref{lem:shiftedsymmetric} implies that
$T \sim^* T' \in \Shift(\Staircase/\tau)$ if and only
if $T^\star \sim^* (T')^\star \in \SYT(\Rect/\widetilde{\tau})$
(for this, we need both definitions of the dual equivalence relation,
one for each direction).
Since any two tableaux in $\SYT(\Rect/\widetilde{\tau})$ are dual
equivalent, the result follows.
\end{proof}

For $T \in \Shift(\Staircase)$, and $\boldb = \{b_1, \dots, b_{n(n+1)}\}$,
$b_1, \dots, b_{n(n+1)} \in \RP^1$, let
$y_T(\boldb) := x_{T^\star}(\boldb^\star)$.
Theorem~\ref{thm:Ysymm} tells us 
that $y_T(\boldb) \in Y(\boldb)$, 
and every 
point in $Y(\boldb)$ is of the form $y_T(\boldb)$ for some standard 
shifted 
tableau $T$.  This key fact will be used implicitly throughout the
rest of the paper.

\begin{theorem}
\label{thm:Ydualequiv}
Suppose that $b_1 \preceq b_2 \preceq \dots \preceq b_{n(n+1)/2}$,
and that $b_i = b_{i+1} = \dots = b_j$.  
\begin{enumerate}
\item[(i)]
Let $T \in \Shift(\Staircase)$.
Then $y_T(\boldb) \in Y_\sigma(b_i)$ where $\sigma$ is
the rectification shape of $T_{[i,j]}$.
\item[(ii)]  Let $T, T' \in \Shift(\Staircase)$ be two tableaux such 
that 
$T_{<i} = T'_{<i}$,
$T_{>j} = T'_{>j}$.
Then $y_T(\boldb) = y_{T'}(\boldb)$ if and only if
$T_{[i,j]} \sim^* T'_{[i,j]}$.
\end{enumerate}
\end{theorem}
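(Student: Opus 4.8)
\textbf{Proof proposal for Theorem~\ref{thm:Ydualequiv}.}

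The plan is to transfer both statements directly from their counterparts for $X$, namely Theorem~\ref{thm:correspondence}(iv) and (v), using the dictionary $y_T(\boldb) = x_{T^\star}(\boldb^\star)$ together with the two translation tools we have already assembled: Lemma~\ref{lem:shiftedsymmetric} (which relates shifted rectification/switching to the symmetrical version) and Proposition~\ref{prop:Yschubertwronski} (which relates the Schubert varieties $Y_\sigma(a)$ to $X_{\widetilde\sigma}(a)$, and more precisely pins down the strict partition from the order of vanishing of the Wronskian).

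For part (i), I would argue as follows. Applying Theorem~\ref{thm:correspondence}(iv) to $x_{T^\star}(\boldb^\star)$ at the point $b_i$, which occurs with multiplicity $2(j-i+1)$ in $\boldb^\star$ since the block $b_i = \dots = b_j$ gets doubled: $x_{T^\star}(\boldb^\star) \in X_\mu(b_i)$, where $\mu$ is the rectification shape of $(T^\star)_{[2i-1,\,2j]}$. Now $(T^\star)_{[2i-1,2j]}$ is exactly $(T_{[i,j]})^\star$ (the symmetrical skew tableau built from the shifted skew tableau $T_{[i,j]}$, with entries relabelled), so by Lemma~\ref{lem:shiftedsymmetric} its rectification is $(\hat{T}_{[i,j]})^\star$, whose shape is $\widetilde\sigma$ where $\sigma$ is the rectification shape of $T_{[i,j]}$. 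Thus $x_{T^\star}(\boldb^\star) \in X_{\widetilde\sigma}(b_i)$. Combined with $y_T(\boldb) \in Y$ (from Theorem~\ref{thm:Ysymm}), this gives $y_T(\boldb) \in Y \cap X_{\widetilde\sigma}(b_i) = Y_\sigma(b_i)$, which is the claim. One subtlety to check here is that the rectification shape is genuinely $\widetilde\sigma$ and not merely a partition dominating it; this is where I would invoke Proposition~\ref{prop:Yschubertwronski}, noting that $b_i$ has multiplicity exactly $2(j-i+1)$ and no more in $\boldb^\star$, so the order of vanishing argument (as in the uniqueness portion of the proof of Theorem~\ref{thm:correspondence}) forces the exact strict partition.

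For part (ii), the $\Longleftarrow$ direction is straightforward: if $T_{[i,j]} \sim^* T'_{[i,j]}$, then by Lemma~\ref{lem:shiftedsymmetric} (using the ``some $U$'' form of dual equivalence in one direction and the ``all $U$'' form in the other, exactly as in the proof of Proposition~\ref{prop:specialdualequiv}) we get $(T^\star)_{[2i-1,2j]} \sim^* (T'^\star)_{[2i-1,2j]}$ in $\SYT(\Rect)$; since moreover $(T^\star)_{<2i-1} = (T'^\star)_{<2i-1}$ and $(T^\star)_{>2j} = (T'^\star)_{>2j}$ (because $T_{<i} = T'_{<i}$ and $T_{>j} = T'_{>j}$, and the odd entries are determined by the even ones via symmetry), Theorem~\ref{thm:correspondence}(v) gives $x_{T^\star}(\boldb^\star) = x_{T'^\star}(\boldb^\star)$, i.e. $y_T(\boldb) = y_{T'}(\boldb)$. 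The $\Longrightarrow$ direction runs the same implications backwards: $y_T(\boldb) = y_{T'}(\boldb)$ means $x_{T^\star}(\boldb^\star) = x_{T'^\star}(\boldb^\star)$, so Theorem~\ref{thm:correspondence}(v) gives $(T^\star)_{[2i-1,2j]} \sim^* (T'^\star)_{[2i-1,2j]}$, and Lemma~\ref{lem:shiftedsymmetric} translates this back to $T_{[i,j]} \sim^* T'_{[i,j]}$.

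The main obstacle I anticipate is bookkeeping rather than conceptual: one must verify carefully that $(T^\star)_{[2i-1,2j]}$ coincides (as a standard skew tableau up to relabelling of entries) with $(T_{[i,j]})^\star$, i.e. that taking the symmetrical completion commutes with taking a subtableau on an interval $[2i-1,2j]$ whose endpoints are aligned with the odd/even pairing. This hinges on the pairing structure of symmetrical tableaux --- entries $2k-1$ and $2k$ always sit in mirror-image positions --- so restricting to $[2i-1,2j]$ keeps complete pairs, and the symmetry is preserved. A second point requiring care is the alignment of the ``outside'' and ``inside'' conditions: we need $T^\star_{<2i-1} = T'^\star_{<2i-1}$, and this must be deduced from $T_{<i} = T'_{<i}$ using the fact that in a symmetrical tableau the positions of the odd entries are forced by those of the even entries. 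Once these two alignment facts are nailed down, everything else is a direct transcription of Theorem~\ref{thm:correspondence}(iv)--(v) through the $\star$-dictionary.
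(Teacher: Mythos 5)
Your proposal is correct and follows essentially the same route as the paper: both parts are transferred from Theorem~\ref{thm:correspondence}(iv)--(v) via the dictionary $y_T(\boldb) = x_{T^\star}(\boldb^\star)$, with Lemma~\ref{lem:shiftedsymmetric} converting rectification shapes and dual equivalences between the shifted and symmetrical settings. The extra bookkeeping you flag (that $(T^\star)_{[2i-1,2j]} = (T_{[i,j]})^\star$ and that the fixed outer/inner parts match up) is exactly what the paper uses implicitly, and your worry about the rectification shape only ``dominating'' $\widetilde\sigma$ is unnecessary since Theorem~\ref{thm:correspondence}(iv) already identifies the rectification shape exactly.
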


\begin{proof}
For (i), by Theorem~\ref{thm:correspondence}(iv), 
$y_T(\boldb) = x_{T^\star}(\boldb^\star) \in X_\lambda(b_i)$
where $\lambda$ is the rectification shape of $T^\star_{[2i-1,2j]}$.
By Lemma~\ref{lem:shiftedsymmetric}, $\lambda = \widetilde\sigma$
where $\sigma$ is the rectification shape of $T_{[i,j]}$.  
Since we also know $y_T(\boldb) \in Y$, we deduce
$y_T(\boldb) \in X_\lambda(b_i) \cap Y = Y_\sigma(b_i)$.

For (ii), we have
$y_T(\boldb) = x_{T^\star}(\boldb^\star)$ and
$y_{T'}(\boldb) = x_{(T')^\star}(\boldb^\star)$.
By Theorem~\ref{thm:correspondence}(v),
$x_{T^\star}(\boldb^\star) = x_{(T')^\star}(\boldb^\star)$ if and
only if
$T^\star_{[2i-1,2j]} \sim^* (T')^\star_{[2i-1,2j]}$.  
By Lemma~\ref{lem:shiftedsymmetric} this is true if and only if
$T_{[i,j]} \sim^* T'_{[i,j]}$.
\end{proof}

\begin{remark} \rm
A related result, \cite[Theorem 6.2]{Pur-Gr}, explains the geometric
significance of the equivalence relation $\sim$ on standard Young tableaux, 
defined by
$T \sim T'$ iff the rectifications of $T$ and $T'$ are equal.  
This too has an analogue
for $Y$, which can be proved by similar arguments.  Since new
notation is required to state it, we will omit further details here.

Similarly, \cite[Theorem 3.5]{Pur-Gr}, which describes how the
correspondence $(T, \bolda) \mapsto x_T(\bolda)$ changes at points 
of discontinuity, has an analogue for $Y$.  The statement is virtually
identical, but with tableaux replaced by shifted tableaux.  
Here, however, a bit more finesse is required in the proof, 
since a path in $A^\star$ does not satisfy the hypotheses of 
\cite[Theorem 3.5]{Pur-Gr}.  This can be resolved by perturbing
the path, and we leave the details to the reader.
\end{remark}

As an application of Theorem~\ref{thm:Ydualequiv}, we prove a version of 
the Littlewood-Richardson rule for the orthogonal Grassmannian
(Theorem~\ref{thm:YLR}).

\begin{lemma}
\label{lem:Yrectshape}
For $\kappa \in \Sigma$, let $\kappa^\vee \in \Sigma$ be the
strict partition such that $\int_Y [Y_\kappa] \cdot [Y_{\kappa^\vee}] = 1$
($[Y_{\kappa^\vee}]$ is dual to $[Y_\kappa]$ under the Poincar\'e pairing).
Every tableau in $\Shift(\Staircase/\kappa)$ has rectification shape 
$\kappa^\vee$.
\end{lemma}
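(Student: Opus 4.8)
The plan is to mimic the standard-tableau argument: a skew tableau in $\SYT(\Rect/\mu)$ has a unique rectification shape, namely the complement-in-$\Rect$ of $\mu$, and the shifted analogue should follow by the $(\cdot)^\star$-correspondence of Lemma~\ref{lem:shiftedsymmetric}. First I would recall that for $\kappa \in \Sigma$, every tableau in $\SYT(\Rect/\widetilde\kappa)$ has the \emph{same} rectification shape (this is the $\Rect$-analogue of Proposition~\ref{prop:specialdualequiv}, equivalently follows from the fact that $\SYT(\Rect/\widetilde\kappa)$ is a single dual equivalence class). Call that common shape $\nu$. A tableau $T \in \Shift(\Staircase/\kappa)$ gives $T^\star \in \SYT(\Rect/\widetilde\kappa)$, and by Lemma~\ref{lem:shiftedsymmetric} its rectification $\widehat{T^\star} = (\hat T)^\star$, where $\hat T$ is the shifted rectification of $T$. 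So the rectification shape of $T^\star$ is $\widetilde{\sigma}$ where $\sigma$ is the rectification shape of $T$; since the left-hand side is $\nu$ independent of $T$, and $\sigma \mapsto \widetilde\sigma$ is injective on $\Sigma$, we conclude $\sigma$ is the same strict partition for every $T \in \Shift(\Staircase/\kappa)$. It remains only to identify this $\sigma$ with $\kappa^\vee$.

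For that identification I would use the geometry rather than a direct combinatorial count. Take $\boldb = \{0 \prec b_1 \prec \dots \prec b_{n(n+1)/2}\}$, a set of distinct reals with $b_1 = 0$, and consider a shifted tableau $T \in \Shift(\Staircase)$ with $T_{\leq |\kappa|}$ having (shifted) shape $\kappa$; the complementary skew tableau $T_{> |\kappa|}$ then lies in $\Shift(\Staircase/\kappa)$. Applying Theorem~\ref{thm:Ydualequiv}(i) with the block of equal parameters $b_1 = 0$ (multiplicity $|\kappa|$), $y_T(\boldb) \in Y_\kappa(0)$ where I have arranged $T_{[1,|\kappa|]}$ to rectify to $\kappa$ (which is possible since $\Shift(\kappa)$ is nonempty and every tableau there rectifies to $\kappa$). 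Now slide the small parameters: letting $\boldc \to \{\infty,\dots,\infty, b_{|\kappa|+2},\dots\}$ concentrated at $\infty$, or more directly applying Theorem~\ref{thm:Ydualequiv}(i) to the \emph{tail} block of parameters all equal to some $b$, the rectification shape of $T_{[|\kappa|+1, n(n+1)/2]} = T_{>|\kappa|}$ controls which Schubert variety $Y_{\kappa^\vee}(b)$ the point $y_T(\boldb)$ lies on at that other point. The point is that $y_T(\boldb)$ lies simultaneously in $Y_\kappa(0)$ and in $Y_{\rho}(b_{\text{last}})$ where $\rho$ is the rectification shape of the tail tableau; by Proposition~\ref{prop:Yschubertwronski} and dimension count ($|\kappa| + |\rho| = \dim Y$ forces $\rho = \kappa^\vee$ since $Y_\kappa(0) \cap Y_{\kappa^\vee}(b) $ is nonempty and transverse by Theorem~\ref{thm:YMTV}), we get $\rho = \kappa^\vee$. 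Combined with the first paragraph, every tableau in $\Shift(\Staircase/\kappa)$ has rectification shape $\kappa^\vee$.

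The main obstacle I anticipate is the bookkeeping in the second paragraph: making precise that $y_T(\boldb)$ genuinely lands in $Y_\kappa(0) \cap Y_{\kappa^\vee}(b)$ for a single well-chosen $T$, and that the dimension/transversality argument pins down $\kappa^\vee$ exactly (rather than merely $|\rho| \geq \dim Y - |\kappa|$). The clean way to handle this is to invoke Theorem~\ref{thm:Ydualequiv}(i) twice for the \emph{same} $T$ with two different groupings of equal $b_i$'s — one grouping the initial $0$'s, one grouping a trailing constant block — and note that the two rectification shapes $\kappa$ and $\rho$ then satisfy $|\kappa| + |\rho| = \dim Y$ with $Y_\kappa(0) \cap Y_\rho(b) \ni y_T(\boldb) \neq \emptyset$; transversality from Theorem~\ref{thm:YMTV} forces $[Y_\kappa]\cdot[Y_\rho] \neq 0$, hence $= 1$, hence $\rho = \kappa^\vee$ by definition. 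Alternatively, one can skip the geometry entirely and just cite the classical fact that the rectification shape of any tableau of shape $\Staircase/\kappa$ is the Poincaré dual of $\kappa$ in $\Shift(\Staircase)$, which is standard in shifted tableau theory (\cite{Ste, Wor}); but the reduction-from-$X$ argument is more in the spirit of the paper and is the one I would write up.
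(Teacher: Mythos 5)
Your proof is correct and its core argument---completing a tableau of shape $\Staircase/\kappa$ to a tableau $T\in\Shift(\Staircase)$ with $T_{\leq|\kappa|}$ of shape $\kappa$, choosing the parameters in two constant blocks, and applying Theorem~\ref{thm:Ydualequiv}(i) together with transversality and Poincar\'e duality to force the tail's rectification shape to be $\kappa^\vee$---is exactly the paper's proof (the paper takes the two blocks at $0$ and $\infty$). Your first paragraph, establishing via Lemma~\ref{lem:shiftedsymmetric} that all tableaux in $\Shift(\Staircase/\kappa)$ share a common rectification shape, is harmless but unnecessary, since the geometric argument already applies verbatim to every tableau in $\Shift(\Staircase/\kappa)$.
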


\begin{proof}
Let $T \in \Shift(\Staircase)$ be a tableau such that $T_{\leq |\kappa|}$
has shape $\kappa$, and
suppose $T_{> |\kappa|}$ has rectification shape $\sigma$.
Let $\boldb = \{b_1, \dots, b_{n(n+1)}\}$ where
$ b_1 = \dots = b_{|\kappa|} = 0$ and 
$b_{|\kappa|+1} = \dots = b_{n(n+1)/2} = \infty$.
By Theorem~\ref{thm:Ydualequiv}(i), $y_T(\boldb) \in 
Y_\kappa(0) \cap Y_\sigma(\infty)$, but the fact that this intersection is
non-empty implies $\sigma = \kappa^\vee$.
\end{proof}

\begin{theorem}[Littlewood-Richardson rule for $\OGn$]
\label{thm:YLR}
For $\sigma, \tau, \kappa \in \Sigma,$
the Littlewood-Richardson number $c_{\sigma\tau}^\kappa$ for
$\OGn$, defined by 
\[
   [Y_\sigma]\cdot [Y_\tau] = \sum_\kappa c_{\sigma\tau}^\kappa [Y_\kappa]
\]
in $H^*(Y)$, is equal to the number dual equivalence classes 
in $\Shift(\kappa/\tau)$ with rectification shape $\sigma$.
\end{theorem}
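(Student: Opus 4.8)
The plan is to count the points of a generic triple-intersection Schubert problem on $Y$ in two ways. Fix three distinct real points $b', b'', b''' \in \RP^1$ — say $b' = 0$, $b''' = \infty$, and $b''$ some finite nonzero real — and choose generic additional real points so that we can build a multiset $\boldb$ with $b_1 = \dots = b_{|\tau|} = 0$, then a block of $|\kappa| - |\tau|$ copies of $b''$, then $b'' $ replaced appropriately... more precisely, I would take $\boldb = \{b_1 \preceq \dots \preceq b_{n(n+1)/2}\}$ in which $0$ occurs with multiplicity $|\tau|$, $b''$ occurs with multiplicity $|\sigma|$ (these being the middle entries, indices $|\tau|+1$ through $|\tau|+|\sigma|$), and $\infty$ occurs with multiplicity $n(n+1)/2 - |\tau| - |\sigma| = |\kappa^\vee|$ (using $|\sigma| + |\tau| = |\kappa|$ and $|\kappa| + |\kappa^\vee| = \dim Y = n(n+1)/2$). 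By Theorem~\ref{thm:YMTV} and Proposition~\ref{prop:Yschubertwronski}, the fibre $Y(\boldb)$ has exactly $\int_Y [Y_\tau] \cdot [Y_\sigma] \cdot [Y_{\kappa^\vee}]$ points, and by the definition of $\kappa^\vee$ together with the expansion of $[Y_\sigma]\cdot[Y_\tau]$, this integer equals $c_{\sigma\tau}^\kappa$.

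Next I would identify these points combinatorially using Theorem~\ref{thm:Ydualequiv}. Every point of $Y(\boldb)$ is $y_T(\boldb)$ for some $T \in \Shift(\Staircase)$, and by Theorem~\ref{thm:Ydualequiv}(ii) two tableaux $T, T'$ give the same point if and only if $T_{\leq |\tau|}$ and $T'_{\leq |\tau|}$ have the same dual equivalence class (trivially, since by Proposition~\ref{prop:specialdualequiv} all tableaux of straight shape $\tau$ are dual equivalent, so this condition just says they have the same shape $\tau$), the middle blocks $T_{(|\tau|, |\tau|+|\sigma|]}$ and $T'_{(|\tau|,|\tau|+|\sigma|]}$ are dual equivalent, and the tail blocks are dual equivalent (again automatic by Proposition~\ref{prop:specialdualequiv} once the shape at position $|\tau|+|\sigma|$ is fixed). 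But Theorem~\ref{thm:Ydualequiv}(i) forces $T_{\leq |\tau|}$ to have rectification shape $\tau$ — hence shape $\tau$, since it is a straight-shape tableau — so that $T_{(|\tau|, |\tau|+|\sigma|]}$ is a tableau of some skew shape $\kappa/\tau$; and it forces $T_{(|\tau|,|\tau|+|\sigma|]}$ to have rectification shape $\sigma$; and by Lemma~\ref{lem:Yrectshape} the tail $T_{>|\kappa|}$ automatically has rectification shape $\kappa^\vee$ with no further constraint. Therefore the points of $Y(\boldb)$ are in bijection with dual equivalence classes of pairs consisting of a tableau of straight shape $\tau$ (unique) and a tableau in $\Shift(\kappa/\tau)$ of rectification shape $\sigma$, i.e.\ with dual equivalence classes in $\Shift(\kappa/\tau)$ having rectification shape $\sigma$.

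Comparing the two counts gives the theorem. I expect the main obstacle to be the bookkeeping around the straight-shape pieces: one must argue carefully that fixing the rectification shape of $T_{\leq|\tau|}$ pins down its actual shape (because rectification is the identity on straight-shape tableaux), that Proposition~\ref{prop:specialdualequiv} really does collapse the straight-shape and $\Staircase/\kappa$-shape blocks to a single dual equivalence class each, and that the intermediate shape of $T$ at position $|\kappa|$ is forced to be $\kappa$ — this follows because $T_{\leq|\kappa|}$ has shape equal to the shape of $T_{\leq|\tau|}$ (which is $\tau$) extended by the cells of $T_{(|\tau|,|\kappa|]}$, and the latter has rectification shape $\sigma$ with $|\sigma| = |\kappa| - |\tau|$, so the ambient shape is $\kappa$ with $\kappa/\tau$ a valid skew shape. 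The other subtlety is ensuring $\boldb$ can be chosen with the required multiplicities consistent with the ordering $\preceq$ and with $b', b'', b'''$ distinct and real; this is routine since $0 \prec b'' \prec \infty$ for any positive real $b''$. Once these points are nailed down, the argument is a direct application of Theorems~\ref{thm:YMTV} and~\ref{thm:Ydualequiv}, Proposition~\ref{prop:Yschubertwronski}, and Lemmas~\ref{lem:Yrectshape} and Proposition~\ref{prop:specialdualequiv}.
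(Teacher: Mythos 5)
Your proposal is correct and follows essentially the same route as the paper: choose $\boldb$ with $0$, a finite nonzero real, and $\infty$ occurring with multiplicities $|\tau|$, $|\kappa|-|\tau|$, $|\kappa^\vee|$, count $Y(\boldb)$ via Theorem~\ref{thm:YMTV} as $c_{\sigma\tau}^\kappa$, and then use Theorem~\ref{thm:Ydualequiv} together with Proposition~\ref{prop:specialdualequiv} and Lemma~\ref{lem:Yrectshape} to identify the points with dual equivalence classes in $\Shift(\kappa/\tau)$ of rectification shape $\sigma$. The only cosmetic difference is that you fix the middle multiplicity as $|\sigma|$ rather than $|\kappa|-|\tau|$, which agree in the only nontrivial case.
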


\begin{proof}
With $\kappa^\vee$ as in Lemma~\ref{lem:Yrectshape}, we have
\[
  c_{\sigma\tau}^\kappa 
  = \int_Y [Y_\sigma] \cdot [Y_\tau] \cdot [Y_{\kappa^\vee}]\,.
\]
By Theorem~\ref{thm:YMTV}, this is the number of points in
\[
   Y_\tau(0) \cap Y_\sigma(1) \cap Y_{\kappa^\vee}(\infty)\,.
\]
Theorem~\ref{thm:Ydualequiv} allows to determine exactly which
tableaux correspond to points in this intersection, and when two
tableaux correspond to the same point.

Let $\boldb = \{b_1, \dots, b_{n(n+1)}\}$ where
\[
  b_1 = \dots = b_{|\tau|} = 0\,, \quad
  b_{|\tau|+1} = \dots = b_{|\kappa|} = 1\,,
  \quad\text{and}\quad
  b_{|\kappa|+1} = \dots = b_{n(n+1)/2} = \infty\,,
\]
and let $T \in \Shift(\Staircase)$.  
By Theorem~\ref{thm:Ydualequiv}(i) we have:
\begin{itemize}
\item $y_T(\boldb) \in Y_\tau(0)$ $\iff$
$T_{\leq |\tau|}$ has shape $\tau$; 
\item $y_T(\boldb) \in Y_\sigma(1)$ $\iff$ $T_\tkinterval$ has 
rectification shape $\sigma$;
\item $y_T(\boldb) \in Y_{\kappa^\vee}(\infty)$ $\iff$
$T_{>|\kappa|}$ has rectification shape $\kappa^\vee$, or equivalently
by Lemma~\ref{lem:Yrectshape},
$T_{\leq|\kappa|}$ has shape $\kappa$.
\end{itemize}
In other words 
$y_T(\boldb)  \in 
Y_\tau(0) \cap Y_\sigma(1) \cap Y_{\kappa^\vee}(\infty)$ if and only if
$T_\tkinterval$ has shape $\kappa/\tau$ and rectification
shape $\sigma$.

Moreover, by Theorem~\ref{thm:Ydualequiv}(ii), $T$ and $T'$ correspond to 
the same point if and only if
$T_{\leq |\tau|}  \sim^* T'_{\leq |\tau|}$,
$T_\tkinterval \sim^* T'_\tkinterval$ and
$T_{>|\kappa|} \sim^* T'_{>|\kappa|}$.
By Proposition~\ref{prop:specialdualequiv},
the first and last of these are true whenever the subtableaux have the
same shape.  Thus $T$ and $T'$ correspond to the same point if
and only if $T_\tkinterval \sim^* T'_\tkinterval$.

These two arguments show that the point $y_T(\boldb)$ depends only on
$T_\tkinterval$.  Putting them together, the points in
$Y_\tau(0) \cap Y_\sigma(1) \cap Y_{\kappa^\vee}(\infty)$
correspond bijectively to tableaux in $\Shift(\kappa/\tau)$ with
rectification shape $\sigma$, modulo dual equivalence.
\end{proof}

\medskip


\end{document}